\newcommand{\no}[1]{#1}
\renewcommand{\no}[1]{}
\renewcommand{\Delta}{\upDelta}}
\date{\today}
\newtheorem{theorem}{Theorem}[section]
\newtheorem{lem}{Lemma}[section]
\theoremstyle{remark}
\newcommand{\bel}{\begin{equation} \label}
\newcommand{\ee}{\end{equation}}
\newcommand{\R}{{\mathbb R}}
\def\phi {\varphi}
\def\epsilon {\varepsilon}
\renewcommand{\leq}{\leqslant}
\renewcommand{\geq}{\geqslant}
\def\beq{\begin{equation}}
\def\eeq{\end{equation}}
\newcommand{\bea}{\begin{eqnarray}}
\newcommand{\eea}{\end{eqnarray}}
\newcommand{\beas}{\begin{eqnarray*}}
\newcommand{\eeas}{\end{eqnarray*}}
\providecommand{\abs}[1]{\left\lvert#1\right\rvert}
\providecommand{\norm}[1]{\left\lVert#1\right\rVert}
\numberwithin{equation}{section}
\title[Recovery of semilinear term]{Recovery of nonlinear terms for reaction diffusion equations from boundary measurements}
\author{Yavar Kian}
\address{Aix Marseille Univ, Universit\'e de Toulon, CNRS, CPT, Marseille, France.}
\email{yavar.kian@univ-amu.fr}
\author{Gunther Uhlmann}
\address{G. Uhlmann, Department of Mathematics\\
       University of Washington\\
       Seattle, WA  98195-4350\\
       USA\\
        and Institute for Advanced Study of the Hong Kong University of Science and Technology}
\email{gunther@math.washington.edu}
\begin{document}

\begin{abstract}
We consider the inverse problem of determining a general semilinear term appearing in  nonlinear   parabolic  equations. For this purpose, we derive a new criterion that allows to prove global recovery of some general class of semilinear terms from lateral boundary measurements of solutions of the equation with initial condition fixed at zero. More precisely, we prove, for what seems to be the first time, the unique and stable recovery of general semilinear terms depending on time and space variables independently of the solution of the nonlinear equation from the knowledge of the parabolic Dirichlet-to-Neumann map associated with the solution of the equation with initial condition fixed at zero.  Our approach is based on the second linearization of the inverse problem under consideration.\\

\medskip
\noindent
{\bf Mathematics subject classification 2010 :} 35R30, 	35J25, 35J61, 35K20, 35K58.
\end{abstract}

\maketitle

%=========================================================

\section{Introduction}

\subsection{Statement}
Let $\alpha\in(0,1)$ and let $\Omega$ be a bounded and connected $\mathcal C^{2+\alpha}$ domain of $\mathbb{R}^n$, $n\geq 2$.   Fixing $T>0$, we consider  the initial boundary value problem
\bel{eq1}
\left\{
\begin{array}{ll}
\partial_tu-\Delta u+F(t,x,u)=0  & \mbox{in}\ (0,T)\times\Omega ,\\
u= g&\mbox{on}\ (0,T)\times\partial\Omega,\\
u(0,x)=0& x\in\Omega,
\end{array}
\right.
\ee
with  $F\in \mathcal C^1([0,T]\times\overline{\Omega}\times\R)$ a well chosen semilinear terms, $g\in\mathcal C([0,T];\mathcal C^{2+\alpha}(\partial\Omega))\cap \mathcal C^{1+\alpha/2}([0,T];\mathcal C(\partial\Omega))$ a suitable Dirichlet boundary condition. Under suitable assumptions (see Subsection 1.4) there exists $T_1\in(0,T]$ such that the problem \eqref{eq1} admits a unique solution $u\in\mathcal C([0,T_1];\mathcal C^{2+\alpha}(\overline{\Omega}))\cap \mathcal C^{1+\alpha/2}([0,T_1];\mathcal C(\overline{\Omega}))$. The goal of this paper is to prove the unique and stable recovery of the nonlinear term  $F$  from partial knowledge of the parabolic Dirichlet-to-Neumann (DN in short) map $g\mapsto \partial_\nu u_{|(0,T_1)\times\partial\Omega}$  associated with \eqref{eq1}, where  $\nu(x)$ denotes the outward unit normal to $\partial\Omega$ computed at $x \in \partial\Omega$. 

\subsection{Motivations}

Recall that reaction diffusion equations of the form \eqref{eq1} are often used for describing several physical phenomenon with applications in chemistry, biology, geology, physics  and ecology. Such nonlinear equations can be used for describing the spreading of biological populations (e.g. \cite{Fi}), the Rayleigh-B\'enard convection (e.g. \cite{NW}) or models appearing in  combustion theory (e.g. \cite{ZF}). In this context the inverse problem addressed in this paper corresponds to the determination of the underlying physical low of the system, associated with the nonlinear expression  $F$ in \eqref{eq1}, by applying a source and measuring the flux at the lateral  boundary $(0,T)\times\partial\Omega$.

Beside these physical motivations,  there is a natural mathematical motivation for the
study of such inverse problems which are highly nonlinear and ill-posed. Indeed, we consider here a nonlinear problem associated with a nonlinear equation.
\subsection{Known results}

Inverse problems stated for nonlinear equations have received an important interest among the mathematical community these last decades. In this context, the recovery of nonlinear terms corresponds to one of the most challenging problem due to its sever ill-posedness and nonlinearity.
For parabolic equations, the recovery of general semilinear terms has first been adressed by \cite{I1}. Here the author proved the recovery of time independent semilinear terms of the form $F(x,u)$ inside the domain (i.e  $F(x,u)$ with $x\in\Omega$, $u\in \R$) from knowledge of the parabolic DN map associated with a problem similar to \eqref{eq1}, together with the final overdetermination, for solution of the equation having all possible initial conditions. The proof of \cite{I1} is based on the linearization of the inverse problem combined with results of recovery of time-dependent coefficients proved by the same author in \cite{I}. In \cite{CK} the authors proved  that the uniqueness result of \cite{I1} remains true while considering observation given by the parabolic DN map alone for solutions of \eqref{eq1} with all possible initial conditions. Moreover, the authors of \cite{CK} established a stability estimate associated with this problem. It seems that only the recovery of semilinear terms depending on the solution of the equation and space variable at the boundary (semilinear terms of the form $F(x,u)$,  $x\in\partial\Omega$, $u\in\R$)  have been proved from excitation and measurements restricted to the lateral boundary $(0,T)\times\partial\Omega$ for solution of the equation with fixed initial condition  (see e.g. \cite{I2,I3,COY}). We mention also the works of \cite{KR1,KR2} where similar problems have been studied from final time or lateral overdetermination. To the best of our knowledge, there has been no result  so far showing the recovery of general semilinear terms, depending on the space variable $x\in\Omega$,  from lateral measurements of the solution of the associated parabolic equation with initial conditions fixed at zero. 

We mention also   the works of \cite{CK1,EPS2,I3,KKU} devoted to the recovery of quasilinear terms and the works of \cite{FO,HUW,HUW1,IN,IS,KU,KU1,KU2,Ki,LLLS1,LLLS2} devoted to similar problems for hyperbolic  and elliptic equations.

\subsection{Preliminary properties}

Let us fix $T>0$ and two non-decreasing maps $M\in \mathcal C(\R_+;\R_+)$ and $T_1\in \mathcal C(\R_+;(0,T])$. Let us also introduce the following functional space
$$\mathcal H_0:=\{g\in\mathcal C([0,T];\mathcal C^{2+\alpha}(\partial\Omega))\cap \mathcal C^{1+\alpha/2}([0,T];\mathcal C(\partial\Omega)):\ g(0,\cdot)=\partial_tg(0,\cdot)=0\}.$$
We define the set $\mathcal A$ of functions   $F\in \mathcal C^1([0,T]\times\overline{\Omega}\times\R)$ taking values in $\R$ such that the following condition is fulfilled:\\
$(H)$ For all $r>0$ and all $g\in\mathcal H_0$ satisfying 
$$\norm{g}_{\mathcal C([0,T];\mathcal C^{2+\alpha}(\partial\Omega))}+\norm{g}_{\mathcal C^{1+\alpha/2}([0,T];\mathcal C(\partial\Omega))}\leq r$$
the problem \eqref{eq1} admits a unique solution $u\in\mathcal C([0,T_1(r)];\mathcal C^{2+\alpha}(\overline{\Omega}))\cap \mathcal C^{1+\alpha/2}([0,T_1(r)];\mathcal C(\overline{\Omega}))$ satisfying the estimate
\bel{esH} \norm{u}_{\mathcal C([0,T_1(r)];\mathcal C^{2+\alpha}(\overline{\Omega}))}+\norm{u}_{\mathcal C^{1+\alpha/2}([0,T_1(r)];\mathcal C(\overline{\Omega}))}\leq M(r).\ee

According to \cite[Theorem 6.1, pp. 452]{LSU}, \cite[Theorem 2.2, pp. 429]{LSU}, \cite[Theorem 4.1, pp. 443]{LSU}, \cite[Lemma 3.1, pp. 535]{LSU} and \cite[Theorem 5.4, pp. 448]{LSU}, the condition (H) will be fulfilled if $F\in \mathcal C^1([0,T]\times\overline{\Omega}\times\R)$ satisfies the following conditions:\\
(P1) There exist a  non-decreasing function $\mu\in\mathcal C([0,+\infty);[0,+\infty))$ such that
$$ |F(t,x,\lambda)|\leq \mu(|\lambda|),\quad t\in[0,T],\ x\in\overline{\Omega},\ \lambda\in\R.$$
(P2) We have
$$F(0,x,0)=0,\quad x\in\partial\Omega.$$
(P3) There exists two constant $b_1,b_2\geq0$ such that 
$$ F(t,x,\lambda)\lambda\geq -b_1\lambda^2-b_2,\quad t\in[0,T],\ x\in\overline{\Omega},\ \lambda\in\R.$$\\

The above conditions (P1)-(P3) allow to prove existence of a global solution of problem \eqref{eq1} with $T_1(r)=T$, $r>0$. Following some classical results about existence of solutions for \eqref{eq1} (see e.g. \cite{A}) one can prove that condition (H) is fulfilled for some classes of nonlinear terms $F$ satisfying (P1)-(P2)  but not (P3). Solutions of such problems correspond to local solutions of \eqref{eq1} that may blow-up at finite time (see e.g. \cite[Proposition 5.4.1.]{CH}) for which the sign restriction (P3) is not required. In a similar way to \cite{Ki}, in the present paper we work in this  framework with such class of local solutions of \eqref{eq1}.

\subsection{Main results }

We fix  $r>0$, $\epsilon\in(0,1)$, $\delta_1\in(0,T_1(r+\epsilon))$ and $\delta_2>0$ such that there exists $\chi\in\mathcal C^\infty_0(\R_+\times\partial\Omega;[0,+\infty))$ satisfying $\chi=\delta_2$ on $[\delta_1,T]\times\partial\Omega$ and $$\norm{\chi}_{\mathcal C^{1+\alpha/2}([0,T]\times\partial\Omega)}+\norm{\chi}_{\mathcal C([0,T];\mathcal C^{2+\alpha}(\partial\Omega))}=1.$$ Then,  for any $\lambda\in[-r,r]$, $h\in\mathcal H_0$ satisfying
\bel{esss}\norm{h}_{\mathcal C([0,T];\mathcal C^{2+\alpha}(\partial\Omega))}+\norm{h}_{\mathcal C^{1+\alpha/2}([0,T];\mathcal C(\partial\Omega))}\leq \epsilon,\ee
we consider the initial boundary value problem
\bel{eq2}
\left\{
\begin{array}{ll}
\partial_tu-\Delta u+F(t,x,u)=0  & \mbox{in}\ (0,T_1(r+\epsilon))\times\Omega ,\\
u(t,x)= \lambda \chi(t,x)+ h(t,x)&(t,x)\in (0,T_1(r+\epsilon))\times\partial\Omega,\\
u(x,0)=0& x\in\Omega.
\end{array}
\right.
\ee
Assuming that $F\in\mathcal A$, the assumption (H) implies that this problem admits a unique solution $u_{\lambda,h}\in\mathcal C([0,T_1(r+\epsilon)];\mathcal C^{2+\alpha}(\overline{\Omega}))\cap \mathcal C^{1+\alpha/2}([0,T_1(r+\epsilon)];\mathcal C(\overline{\Omega}))$ satisfying the estimate
\bel{ess1} \norm{u_{\lambda,h}}_{\mathcal C([0,T_1(r+\epsilon)];\mathcal C^{2+\alpha}(\overline{\Omega}))}+\norm{u_{\lambda,h}}_{\mathcal C^{1+\alpha/2}([0,T_1(r+\epsilon)];\mathcal C(\overline{\Omega}))}\leq M(r+\epsilon).\ee
We consider $B_\epsilon:=\{h\in\mathcal H_0:\ h\ \textrm{satisfies estimate \eqref{esss}}\}$ and we define, for all $\lambda\in[-r,r]$, the map
$$\mathcal N_{\lambda, r,F}: B_\epsilon\ni h\mapsto \partial_\nu u_{\lambda,h}|_{(0,T_1(r+\epsilon))\times\partial\Omega}\in L^2((0,T_1(r+\epsilon))\times\partial\Omega).$$
Our first main result can be stated as follows

\begin{theorem}
\label{t1}
 For $j=1,2$, let $F_j\in\mathcal A\cap \mathcal C^{1}([0,T]\times\overline{\Omega};\mathcal C^4(\R))$. Assume that the following conditions  
\bel{t1a} F_1(t,x,0)=F_2(t,x,0)=0,\quad t\in(0,T),\ x\in\Omega,\ee
\bel{t1b}\partial_u^2 F_1(t,x,u)\leq0,\quad \partial_u^2 F_1(t,x,-u)\geq0, \quad t\in(0,T),\ x\in\Omega,\ u\in[0,+\infty),\ee
are fulfilled. We fix also $q\in W^{1,\infty}((0,T)\times\Omega)$ such that
\bel{t1d} \max_{j=1,2}\partial_uF_j(t,x,0)\leq q(t,x),\quad (t,x)\in (0,T)\times\Omega.\ee
Then, there exists a constant $a_1>0$, depending only on $\Omega$, $T$, $\delta_1$, $q$, $n$ and $\chi$, such that the   condition
 \bel{t1c} \mathcal N_{\lambda,r,F_1}=\mathcal N_{\lambda,r,F_2},\quad \lambda\in(-r,r)\ee
implies that
$$ F_1(t,x,s)=F_2(t,x,s),\quad (t,x)\in[\delta_1,T_1(r+\epsilon)]\times\Omega,\ s\in[-ra_1,ra_1].$$
\end{theorem}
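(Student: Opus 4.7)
\emph{Overview and first linearisation.} I would use a first and a second Fréchet linearisation of the Dirichlet-to-Neumann map in the boundary perturbation $h$, combined with a Runge-type density argument, and then exploit the variation in $\lambda$ to cover an interval of values of $u$. Since $F_j(t,x,0) = 0$ by \eqref{t1a} and the initial condition vanishes, the solution $u_\lambda^{(j)}$ of \eqref{eq2} with $h = 0$ is a controlled perturbation of $0$ for $\lambda \in (-r,r)$, and evaluating $\mathcal N_{\lambda,r,F_1} = \mathcal N_{\lambda,r,F_2}$ at $h = 0$ gives $\partial_\nu u_\lambda^{(1)} = \partial_\nu u_\lambda^{(2)}$ on the lateral boundary. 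For fixed $\lambda$, the first Fréchet derivative of $\mathcal N_{\lambda,r,F_j}$ at $h=0$ in a direction $h_1 \in B_\epsilon$ is the Neumann trace of the linearised solution $v_1^{(j)}[h_1]$, solving
\[
\partial_t v - \Delta v + q_\lambda^{(j)}(t,x)\, v = 0, \quad v|_{\partial\Omega} = h_1, \quad v(0,\cdot) = 0,
\]
with time-dependent zero-order coefficient $q_\lambda^{(j)}(t,x) := \partial_u F_j(t,x, u_\lambda^{(j)}(t,x))$. The equality of the two nonlinear DN maps at first order forces equality of the two linear DN maps; a uniqueness theorem for the inverse coefficient problem for a linear parabolic operator from lateral Dirichlet-to-Neumann data then gives $q_\lambda^{(1)} = q_\lambda^{(2)}$ on $(\delta_1, T_1(r+\epsilon)) \times \Omega$, and hence $v_1^{(1)}[h_1] = v_1^{(2)}[h_1] =: v_1[h_1]$.

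\emph{Second linearisation and integral identity.} The second Fréchet derivative in two directions $h_1, h_2 \in B_\epsilon$ gives $V^{(j)}[h_1, h_2]$ solving
\[
\partial_t V - \Delta V + q_\lambda V = -\partial_u^2 F_j(t,x, u_\lambda^{(j)})\, v_1[h_1]\, v_1[h_2], \quad V|_{\partial\Omega} = 0, \quad V(0,\cdot) = 0,
\]
with equal Neumann traces on the lateral boundary. Pairing $V^{(1)} - V^{(2)}$ against a solution $w$ of the backward adjoint equation $-\partial_t w - \Delta w + q_\lambda w = 0$ with $w(T_1,\cdot) = 0$ and free lateral data, and integrating by parts using that $V^{(j)}|_{\partial\Omega} = 0$, $V^{(j)}(0,\cdot) = 0$ and the equality of Neumann traces, yields
\[
\int_0^{T_1(r+\epsilon)}\!\!\int_\Omega \bigl[\partial_u^2 F_1(\cdot, u_\lambda^{(1)}) - \partial_u^2 F_2(\cdot, u_\lambda^{(2)})\bigr]\, v_1[h_1]\, v_1[h_2]\, w \, dx\, dt = 0.
\]
A Runge-type density of the triple products $v_1[h_1]\, v_1[h_2]\, w$ in $L^1((\delta_1, T_1(r+\epsilon)) \times \Omega)$ then promotes this orthogonality to the pointwise identity $\partial_u^2 F_1(\cdot, u_\lambda^{(1)}) = \partial_u^2 F_2(\cdot, u_\lambda^{(2)})$ on $(\delta_1, T_1(r+\epsilon)) \times \Omega$.

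\emph{Passing to $F_1 = F_2$ and main obstacle.} Differentiating the identity $q_\lambda^{(1)} = q_\lambda^{(2)}$ in $\lambda$ and inserting the pointwise identity of second derivatives, I obtain $\partial_u^2 F_1(\cdot, u_\lambda^{(1)})\,(\partial_\lambda u_\lambda^{(1)} - \partial_\lambda u_\lambda^{(2)}) = 0$; combined with the one-sided strict monotonicity of $\partial_u F_1$ in $u$ guaranteed by the sign conditions \eqref{t1b}, this forces $\partial_\lambda u_\lambda^{(1)} = \partial_\lambda u_\lambda^{(2)}$, and integration in $\lambda$ from $0$ (where both solutions vanish) gives $u_\lambda^{(1)} = u_\lambda^{(2)} =: u_\lambda$ on $(\delta_1, T_1(r+\epsilon)) \times \Omega$. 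Substituting in the nonlinear PDE then yields $F_1(t,x, u_\lambda(t,x)) = F_2(t,x, u_\lambda(t,x))$ there. A maximum-principle comparison with the linear Cauchy problem $\partial_t w - \Delta w + q w = 0$, $w|_{\partial\Omega} = \lambda\chi$, $w(0,\cdot) = 0$, together with the fact that $\chi$ equals the positive constant $\delta_2$ on $[\delta_1, T] \times \partial\Omega$, shows that the range of $\lambda \mapsto u_\lambda(t,x)$ over $\lambda \in (-r,r)$ and $(t,x) \in [\delta_1, T_1(r+\epsilon)] \times \Omega$ contains $[-ra_1, ra_1]$ for an explicit $a_1 > 0$ depending only on $\Omega, T, \delta_1, q, n$ and $\chi$. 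I expect the main difficulty to lie in the Runge-type density step for products of solutions of the parabolic equation with time-dependent zero-order coefficient: its proof typically requires Carleman estimates for the adjoint equation and a careful unique-continuation argument from lateral boundary data, and the preliminary reduction $q_\lambda^{(1)} = q_\lambda^{(2)}$ via the first linearisation is what allows a common adjoint weight $w$ to be used in the integral identity.
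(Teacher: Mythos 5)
Your first linearisation step (in $h$) matches the paper: equality of the nonlinear DN maps forces equality of the linearised DN maps, and the uniqueness theorem for time-dependent zero-order coefficients gives $V_{1,\lambda}=V_{2,\lambda}$, i.e.\ $\partial_u F_1(\cdot,v_{1,\lambda})=\partial_u F_2(\cdot,v_{2,\lambda})$. But your route from there to $v_{1,\lambda}=v_{2,\lambda}$ is broken. You differentiate $q^{(1)}_\lambda=q^{(2)}_\lambda$ in $\lambda$ and invoke a ``one-sided strict monotonicity of $\partial_u F_1$'' to cancel the factor $\partial_u^2F_1(\cdot,u^{(1)}_\lambda)$; condition \eqref{t1b} only gives $\partial_u^2F_1\leq 0$ for $u\geq 0$ and $\geq 0$ for $u\leq0$, so $\partial_u^2F_1$ may vanish identically (e.g.\ $F_1$ linear in $u$) and the identity $\partial_u^2F_1(\cdot,u^{(1)}_\lambda)\,(\partial_\lambda u^{(1)}_\lambda-\partial_\lambda u^{(2)}_\lambda)=0$ then says nothing. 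The correct and much shorter argument is the paper's: $\partial_\lambda v_{j,\lambda}$ solves the linear problem with potential $V_{j,\lambda}$, boundary data $\chi$ and zero initial data, so $V_{1,\lambda}=V_{2,\lambda}$ together with uniqueness for the linear initial boundary value problem gives $\partial_\lambda v_{1,\lambda}=\partial_\lambda v_{2,\lambda}$ directly; integrating from $\lambda=0$ (where both solutions vanish by \eqref{t1a}) yields $v_{1,\lambda}=v_{2,\lambda}$ and hence, by subtracting the two PDEs, $F_1=F_2$ on the graph $G$. This also makes your entire second-linearisation-in-$h$ step unnecessary; moreover the asserted $L^1$-density of the triple products $v_1[h_1]\,v_1[h_2]\,w$ is a substantially stronger and unproved claim than the standard density of products of two solutions.

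The real content of the theorem --- the place where \eqref{t1b}, \eqref{t1d} and the constant $a_1$ actually enter --- is the claim that the range of $\lambda\mapsto v_{1,\lambda}(t_0,x_0)$ over $\lambda\in(-r,r)$ contains $[-a_1r,a_1r]$, and your proposal reduces this to one sentence. A maximum-principle comparison of the nonlinear solution with $\lambda w$ is not immediate: one must first show $v_{1,\lambda}\geq 0$ for $\lambda\geq0$, then use \eqref{t1b} to obtain a sign on $\partial_u^2F_1(\cdot,v_{1,\lambda})$, and then exploit that sign. The paper does so through a second linearisation in $\lambda$: $\partial_\lambda^2 v_{1,\lambda}=v^{(2)}_{1,\lambda}$ solves \eqref{ppeq6} with non-negative source, so $\lambda\mapsto v_{1,\lambda}(t_0,x_0)$ is convex on $[0,r]$ and $v_{1,\lambda}(t_0,x_0)\geq\lambda v^{(1)}_{1,0}(t_0,x_0)$; a further comparison using \eqref{t1d} gives $v^{(1)}_{1,0}\geq w$ with $w$ the solution of \eqref{ww}. (A direct comparison $v_{1,\lambda}\geq\lambda w$ avoiding the second $\lambda$-derivative can be made to work via the concavity bound $F_1(t,x,v)\leq q\,v$ for $v\geq0$, but those steps would have to be written out; your sketch never uses \eqref{t1b} or \eqref{t1d} in this part.) Finally, $a_1=\inf_{(\delta_1,T)\times\Omega}w>0$ is not automatic since $w(0,\cdot)=0$: the paper needs the parabolic Harnack inequality plus unique continuation to exclude an interior zero of $w$ at a time $t\geq\delta_1$. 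None of these steps appears in your proposal, so the proof of the key inclusion $(\delta_1,T_1(r+\epsilon))\times\Omega\times[-a_1r,a_1r]\subset G$ is missing.
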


We will also derive a stability result associated with the uniqueness result of Theorem \ref{t1}. For this purpose, let us first recall that, according to  \cite[Proposition 6.1.]{CK}, for $F\in \mathcal C^{1}(\overline{\Omega}\times[0,T];\mathcal C^3(\R))\cap \mathcal A$, $r>0$ and $\lambda\in[-r,r]$, the map $\mathcal N_{\lambda,r,F_1}$ admits a Fr\'echet derivative at $0$ denoted by $\mathcal N_{\lambda,r,F}'(0)$. We consider the stable recovery of $F$ from the data $\mathcal N_{\lambda,r,F}'(0)$, $\lambda\in[-r,r]$. For this purpose, we start by introducing the functional spaces $$\mathcal K_0^T:=\{H_{|(0,T)\times\partial\Omega}:\ H\in L^2(0,T;H^1(\Omega))\cap H^1(0,T;H^{-1}(\Omega)), H_{|\{0\}\times\Omega}=0\},$$
$$\mathcal K_T^T:=\{H_{|(0,T)\times\partial\Omega}:\ H\in L^2(0,T;H^1(\Omega))\cap H^1(0,T;H^{-1}(\Omega)), H_{|\{T\}\times\Omega}=0\},$$
$$S_0:=\{H\in L^2(0,T;H^1(\Omega))\cap H^1(0,T;H^{-1}(\Omega)): H_{|\{0\}\times\Omega}=0,\ \partial_tH-\Delta H=0\},$$
$$S_T:=\{H\in L^2(0,T;H^1(\Omega))\cap H^1(0,T;H^{-1}(\Omega)): H_{|\{T\}\times\Omega}=0,\ \partial_tH+\Delta H=0\}.$$
According to \cite[Proposition 2.1]{CK1}, for any $h\in \mathcal K_j^T$, $j=0,T$, there exists a unique $H\in S_j$ such that $H_{|(0,T)\times\partial\Omega}=h$. Therefore, we can associate with $\mathcal K_j^T$, $j=0,T$, the norm defined by
$$\norm{H_{|(0,T)\times\partial\Omega}}_{\mathcal K_j^T}^2=\norm{H}_{L^2(0,T;H^1(\Omega))}^2+\norm{H}_{H^{1}(0,T;H^{-1}(\Omega))}^2,\quad H\in S_j.$$
Then, combining \cite[Proposition 6.1.]{CK} with \cite[Proposition 2.3]{CK1}, we deduce that $\mathcal N_{\lambda,r,F_1}'(0)-\mathcal N_{\lambda,r,F_2}'(0)$, $\lambda\in[-r,r]$, can be uniquely extended to a bounded operator from $\mathcal K_0^{T_1(r+\epsilon)}$ to $\mathcal K_{T_1(r+\epsilon)}^*$, where $\mathcal K_{T_1(r+\epsilon)}^*$ denotes the dual space of $\mathcal K_{T_1(r+\epsilon)}^{T_1(r+\epsilon)}$.

Using these properties, we can state our stability result as follows.

\begin{theorem}
\label{t2}
 For $j=1,2$, let $F_j\in\mathcal C^{1}([0,T]\times\overline{\Omega};\mathcal C^4(\R))\cap \mathcal A$ and let conditions \eqref{t1a}-\eqref{t1b} be fulfilled.    Assume also that there exists a positive and non decreasing function $\kappa\in\mathcal C([0,+\infty))$ such that
\bel{t2a}\sum_{j=1}^2\sum_{k=0}^2\norm{\partial_u^kF_j(\cdot,u)}_{W^{1,\infty}((0,T)\times\Omega)}\leq \kappa(|u|),\quad u\in\R.\ee
Then, there exists $a_2>0$, depending on $\kappa(0)$, $T_1(r+\epsilon)$, $\Omega$, $\delta_1$, $n$, $\chi$, such that,  for all $r>0$, we have 
\bel{t2b}\begin{aligned}& \sup_{(t,x)\in (\delta_1,T_1(r+\epsilon))\times\Omega}\sup_{s\in(-ra_2,ra_2)} |F_1(t,x,s)-F_2(t,x,s)|\\
&\leq C_r\sup_{\lambda\in(-r,r)}\ln\left(3+\norm{\mathcal N_{\lambda,r,F_1}'(0)-\mathcal N_{\lambda,r,F_2}'(0)}_{\mathcal B\left(\mathcal K_0^{T_1(r+\epsilon)};\mathcal K_{T_1(r+\epsilon)}^*\right)}^{-1}\right)^{-\theta},\end{aligned}\ee
with $C_r>0$ depending on $\Omega$, $\kappa$, $r$, $M(r+\epsilon)$, $T_1(r+\epsilon)$, $T$, $\chi$, $n$ and $\theta>0$ depending on $n$.
\end{theorem}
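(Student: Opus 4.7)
\medskip
\noindent\textbf{Proof proposal for Theorem \ref{t2}.} The plan is to reduce the nonlinear stability problem to a known logarithmic stability estimate for a linear parabolic inverse problem, and then translate the information back using Taylor expansion together with a maximum-principle argument. Differentiating \eqref{eq2} with respect to $h$ at $h=0$, as in \cite[Proposition 6.1]{CK}, one identifies $\mathcal N_{\lambda,r,F_j}'(0)$ with the Dirichlet-to-Neumann map of the linear parabolic equation
\beq
\partial_t v-\Delta v+a_{j,\lambda}(t,x)\,v=0,\qquad a_{j,\lambda}(t,x):=\partial_u F_j(t,x,u_{j,\lambda}(t,x)),
\eeq
on $(0,T_1(r+\epsilon))\times\Omega$, where $u_{j,\lambda}=u_{\lambda,0}$ solves \eqref{eq2} with $h=0$ and nonlinearity $F_j$. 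Assumption \eqref{t2a} combined with \eqref{ess1} bounds the coefficients $a_{j,\lambda}$ uniformly in $W^{1,\infty}((0,T_1(r+\epsilon))\times\Omega)$ in terms of $\kappa(M(r+\epsilon))$.

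I would then invoke a logarithmic stability estimate of the type developed in \cite{CK,CK1} for the recovery of a bounded time-dependent potential in a parabolic equation from its DN map on a cylinder containing $[\delta_1,T_1(r+\epsilon)]\times\partial\Omega$, yielding
\beq
\norm{a_{1,\lambda}-a_{2,\lambda}}_{L^\infty((\delta_1,T_1(r+\epsilon))\times\Omega)}\leq C_r\log\!\left(3+\norm{\mathcal N_{\lambda,r,F_1}'(0)-\mathcal N_{\lambda,r,F_2}'(0)}_{\mathcal B(\mathcal K_0^{T_1(r+\epsilon)};\mathcal K_{T_1(r+\epsilon)}^*)}^{-1}\right)^{-\theta},
\eeq
with $\theta>0$ depending only on $n$, as in the statement.

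Since $F_j(t,x,0)=0$ by \eqref{t1a}, the fundamental theorem of calculus gives
\beq
F_1(t,x,s)-F_2(t,x,s)=\int_0^s\bigl[\partial_uF_1(t,x,\sigma)-\partial_uF_2(t,x,\sigma)\bigr]\,d\sigma,
\eeq
so it suffices to control $(\partial_uF_1-\partial_uF_2)(t,x,\sigma)$ pointwise for $(t,x)\in(\delta_1,T_1(r+\epsilon))\times\Omega$ and $\sigma\in[-ra_2,ra_2]$. The linear stability above controls the difference only along the graph $\sigma=u_{j,\lambda}(t,x)$; to spread it over an interval I would use the strong maximum principle applied to $u_\lambda$, together with the facts that $\chi=\delta_2>0$ on $[\delta_1,T]\times\partial\Omega$ and $F_j(t,x,0)=0$, to show that the map $\lambda\mapsto u_{j,\lambda}(t,x)$ is strictly monotone on $(-r,r)$ with image containing an interval $[-ra_2,ra_2]$ for some $a_2>0$ independent of $(t,x)$ in the fixed cylinder. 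Condition \eqref{t1b} provides the convexity/concavity needed to preserve this monotonicity over the whole interval.

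The main obstacle is the mismatch between $u_{1,\lambda}$ and $u_{2,\lambda}$: the linear stability controls $\partial_uF_1(t,x,u_{1,\lambda})-\partial_uF_2(t,x,u_{2,\lambda})$, whereas one really wants $\partial_uF_1(t,x,u_{2,\lambda})-\partial_uF_2(t,x,u_{2,\lambda})$. I would close this gap by writing
\beq
\partial_uF_1(t,x,u_{1,\lambda})-\partial_uF_1(t,x,u_{2,\lambda})=\left(\int_0^1\partial_u^2F_1(t,x,\tau u_{1,\lambda}+(1-\tau)u_{2,\lambda})\,d\tau\right)(u_{1,\lambda}-u_{2,\lambda}),
\eeq
and estimating $\norm{u_{1,\lambda}-u_{2,\lambda}}_{L^\infty}$ by $\norm{F_1-F_2}_{L^\infty((\delta_1,T_1(r+\epsilon))\times\Omega\times[-ra_2,ra_2])}$ through a parabolic energy estimate applied to the equation satisfied by $u_{1,\lambda}-u_{2,\lambda}$ (whose right-hand side is $F_2(\cdot,u_{2,\lambda})-F_1(\cdot,u_{2,\lambda})$), using \eqref{t2a} for the Lipschitz constants. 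Integrating in $\sigma$ and then combining with the previous display produces a feedback inequality of the form $\eta\leq C_r\varphi(\lambda)+C_r\eta\,\varphi(\lambda)$ where $\eta$ is the quantity on the left-hand side of \eqref{t2b} and $\varphi(\lambda)$ is the logarithmic term on the right, which I would absorb for DN-map differences small enough (the case of large differences being trivial by increasing $C_r$). This yields the estimate with the advertised dependencies for $a_2$, $C_r$ and $\theta$.
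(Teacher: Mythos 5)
Your overall strategy --- pass to the linearized potentials $a_{j,\lambda}=\partial_uF_j(\cdot,u_{j,\lambda})$, invoke logarithmic stability for the linear inverse problem, and use a maximum-principle range lemma to convert information on the graph $\sigma=u_{1,\lambda}(t,x)$ into information on an interval $[-ra_2,ra_2]$ --- matches the paper's up to the final step, and that final step has a genuine gap. You propose to handle the mismatch between $u_{1,\lambda}$ and $u_{2,\lambda}$ by bounding $\norm{u_{1,\lambda}-u_{2,\lambda}}_{L^\infty}$ by $\norm{F_1-F_2}_{L^\infty((\delta_1,T_1(r+\epsilon))\times\Omega\times[-ra_2,ra_2])}$ through an energy estimate, and then absorbing the resulting feedback term. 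But the source in the equation for $u_{1,\lambda}-u_{2,\lambda}$ is $(F_2-F_1)(t,x,u_{2,\lambda}(t,x))$ evaluated over all of $(0,T_1(r+\epsilon))\times\Omega$, and the points $(t,x,u_{2,\lambda}(t,x))$ are not confined to $(\delta_1,T_1(r+\epsilon))\times\Omega\times[-ra_2,ra_2]$: nothing restricts $t$ to $t>\delta_1$, and $|u_{2,\lambda}|$ is only bounded by $M(r+\epsilon)$, which has no reason to be $\leq ra_2$ (since $a_2=\inf y\leq\sup\chi$, one has $ra_2\leq r$, while the interior values of the solution are controlled only through $M$). So the right-hand side of your feedback inequality is a sup of $|F_1-F_2|$ over a strictly larger set than the quantity $\eta$ on the left; the inequality is not of the form $\eta\leq C_r\varphi+C_r\eta\varphi$, and the absorption does not close.

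The paper avoids this circularity by a different decomposition of the same difference. It writes $F_j(t,x,s)=F_j(t,x,v_{j,\lambda_{t,x,s}}(t,x))=\int_0^{\lambda_{t,x,s}}V_{j,\tau}(t,x)v^{(1)}_{j,\tau}(t,x)\,d\tau$, using the identity $\partial_\lambda v_{j,\lambda}=v^{(1)}_{j,\lambda}$, so that everything reduces to $\norm{V_{1,\tau}-V_{2,\tau}}_{L^\infty}$ and $\norm{v^{(1)}_{1,\tau}-v^{(1)}_{2,\tau}}_{L^\infty}$. The latter solves a \emph{linear} parabolic problem with source $(V_{2,\tau}-V_{1,\tau})v^{(1)}_{2,\tau}$ and is therefore controlled directly by the potential difference, hence by the DN data, never by $F_1-F_2$ itself; consequently $v_{1,\lambda}-v_{2,\lambda}=\int_0^\lambda(v^{(1)}_{1,\tau}-v^{(1)}_{2,\tau})d\tau$ is also data-controlled, which is exactly the estimate your mismatch term requires. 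If you replace your energy-estimate step by this one, your argument goes through; as written it does not. A secondary point: the positivity and $(t,x)$-uniformity of $a_2$, which you assert via the strong maximum principle, is itself nontrivial --- the paper compares $v^{(1)}_{1,0}$ with the solution $y$ of the problem with constant potential $\kappa(0)$ and then needs the parabolic Harnack inequality plus unique continuation to show $\inf_{(\delta_1,T)\times\Omega}y>0$ --- but that is fixable sketchiness rather than an error.
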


\subsection{Comments about our results}

To the best of our knowledge, in Theorem \ref{t1} and \ref{t2} we obtain the first result of unique and stable recovery  of general class of semilinear terms admitting variation inside the domain (semilinear terms of the form $F(x,u)$, $(x,u)\in\Omega\times\R$) from excitation and measurements made only on the lateral boundary $(0,T)\times\partial\Omega$ of solutions with initial values fixed at zero. Indeed, it seems that all other similar results require at least measurements of solutions for an infinite number of different initial conditions (see \cite{I1,I5,CK}). Actually, Theorem \ref{t1} and \ref{t2} give a positive answer to the open problem stated in \cite{I5} (see \cite[Problem 9.6, pp. 296]{I5}) for any time independent semilinear term $F$ satisfying conditions \eqref{t1a}-\eqref{t1b} as well as condition (H) with a function $T_1$ satisfying
\bel{condd}\inf_{r>0}T_1(r)>0.\ee

Let us observe that the results of Theorem \ref{t1} and \ref{t2} extend several comparable results in terms of generality of the semilinear term under consideration from several aspects. First, Theorem \ref{t1} and \ref{t2} seem to be the first results of unique and stable recovery of semilinear terms admitting variation in $t\in(0,T)$ and $x\in\Omega$ independently of the solution. Indeed, it seems that all other results have been stated with semilinear terms of the form $F(x,u)$, $x\in\Omega$, $u\in\R$, (see \cite{CK,I1}) or $F(u)$,  $u\in\R$ (see \cite{I2,I3,COY}).  For instance, Theorem \ref{t1} and \ref{t2} can be applied to the recovery of nonlinear terms associated with physical phenomenon that admits variation in time and space position. Secondly, in contrast to all other similar works that we know, including works for elliptic equations, the results of Theorem \ref{t1} and \ref{t2} can be applied to semilinear terms that are neither Lipschitz (or  with derivative in $u$ uniformly bounded in some suitable sense) with respect to $u\in\R$  (as considered in \cite{IN,IS}) nor lying on some specific classes of semilinear terms admitting an holomorphic extension with respect to $u\in\mathbb C$ (as considered by \cite{FO,KU,KU1,LLLS1,LLLS2}). For instance, fixing $\epsilon_1>0$, $q_\pm,\gamma_\pm\in\mathcal C^1([0,T]\times\overline{\Omega})$  such that $\pm q_\pm\leq0$    and $\gamma_\pm\geq1$, our result can be applied to any semilinear terms $F\in \mathcal C^{1}([0,T]\times\overline{\Omega};\mathcal C^4(\R))$ of the form
\bel{fnon}F(t,x,\pm u)= q_\pm(t,x) (1+|u|)^{\gamma\pm(t,x)},\quad (t,x,u)\in(0,T)\times\Omega\times[\epsilon_1,+\infty)\ee
for which condition (H) and \eqref{t1a}-\eqref{t1b} are fulfilled.

Let us remark that the proof of Theorem \ref{t1} and \ref{t2} are based on a new criteron, stated in \eqref{t1b}, that we impose to the semilinear terms  under consideration. This criterion is fulfilled by nonlinear  terms which are Lipschitz  with respect to $u\in\R$ but, in contrast to the results of \cite{IN,IS}, it is not limited to such class of nonlinear terms. In order to prove Theorem \ref{t1} and \ref{t2} from condition \eqref{t1b}  we proceed to the second linearization of our inverse problem in order to get more information about values of some classes of solutions of \eqref{eq1}. This idea is inspired by the approach of \cite{SuU}, where the second linearization has already been considered for the recovery of some quasilinear terms, as well as some recent developement of inverse problems for nonlinear equations based on multiple linearization (see e.g. \cite{FO,KU,KKU,KU1,KLU,LLLS1,LLLS2}).

In a similar way to several  results like \cite{CK1,Ki,KKU}, the results of Theorem \ref{t1} and \ref{t2} are not stated with the full knowledge of the DN map associated with \eqref{eq1} but only its knowledge on a neighborhood of Dirichlet boundary conditions of the form $\lambda\chi$, with $\lambda\in\R$ a constant.

We mention that the the constants $a_1$ and $a_2$ appearing in the statement of Theorem \ref{t1} and \ref{t2} are explicitly given by formulas \eqref{a1}, \eqref{l4c}. Namely, $a_1$ corresponds to the infimum of the solution $w$ of the problem \eqref{ww} on $(\delta_1,T)\times\Omega$ while $a_2$ denotes  the infimum of the solution $y$ of the problem \eqref{eqy} on $(\delta_1,T)\times\Omega$. We explain also in the proof of Theorem \ref{t1} and \ref{t2} why $a_1,a_2$ are both positive constants. Since here both $\delta_1$ and $\chi$ depends on $T_1(r+\epsilon)$, the constants $a_1,a_2$ depend also on $T_1(r+\epsilon)$. This means that if $T_1(r+\epsilon)$ is lower bounded by a positive constant independent of $r$ the constants $a_1,a_2$ can be chosen independently of the parameter $r>0$. Such phenomenon can occur for instance if problem \eqref{eq1} admits a global solution. In that context, applying Theorem \ref{t1} one can deduce that the data $\mathcal N_{\lambda,r,F}$, $r>0$, $\lambda\in(-r,r)$, determines globally time independent semilinear terms of the form $F(x,u)$, $x\in\Omega$, $u\in\R$, satisfying the conditions \eqref{t1a}-\eqref{t1b}. In the same way, the result of Theorem \ref{t1} implies that the data $\mathcal N_{\lambda,r,F}$, $r>0$, $\lambda\in(-r,r)$, determines uniquely semilinear terms of the form $F(x,u)$, $x\in\Omega$, $u\in\R$, for which there exists a sufficiently small parameter $r_0>0$ such that for all $x\in\Omega$, the map $u\mapsto F(x,u)$ is analytic in $(-\infty,r_0)\cup  (r_0,+\infty)$.

\subsection{Outline} 
This paper is organized as follows. In Section 2, we consider the proof of the uniqueness result  stated in Theorem \ref{t1} while Section 3 is devoted to the proof of the stability result  stated in Theorem \ref{t2}.

\section{The uniqueness result}
This section is devoted to the proof of the uniqueness result stated in Theorem \ref{t1}. For this purpose, we start with the linearization of the inverse problem.
\subsection{Linearization of the inverse problem}

In this subsection we introduce a linearization procedure for the problem \eqref{eq2}. Namely, we recall the first linearization for this inverse problem stated in \cite{CK1,CK,I1}.  For $j=1,2$, let $F_j$ be given by Theorem \ref{t1} and satisfying \eqref{t1a}-\eqref{t1b}. Fixing  $\lambda\in[-r,r]$, $h\in B_\epsilon$ and $s\in(-1,1)$ we consider $u_j=u_{j,\lambda,s,g}$ the solution of
\bel{ppeq2}
\left\{
\begin{array}{ll}
\partial_tu_j-\Delta u_{j}+F_j(t,x,u_j)=0  & \mbox{in}\ (0,T_1(r+\epsilon))\times\Omega ,
\\
u_j(t,x)= \lambda\chi(t,x)+sh(t,x)&(t,x)\in(0,T_1(r+\epsilon))\times\partial\Omega,\\
u_j(0,x)=0 &x\in\Omega.
\end{array}
\right.
\ee
Let us also consider the linear problem
\bel{ppeq3}
\left\{
\begin{array}{ll}
\partial_tu_j^{(1)}-\Delta u_j^{(1)}+V_{j,\lambda}u_j^{(1)}=0  & \mbox{in}\ (0,T_1(r+\epsilon))\times\Omega ,
\\
u_j^{(1)}=h &\mbox{on}\ (0,T_1(r+\epsilon))\times\partial\Omega.\\
u_j^{(1)}(0,x)=0  &x\in\Omega.
\end{array}
\right.
\ee
with 
$$V_{j,\lambda}(t,x)=\partial_uF_j\left(t,x,u_{j,\lambda,s,h}(t,x))|_{s=0}\right).$$
Since $V_{j,\lambda}\in\mathcal C^1([0,T_1(r+\epsilon)]\times\overline{\Omega})$, it is well known (see \cite[Theorem 5.4, pp. 322]{LSU}) that \eqref{ppeq3} admits a unique solution $u_j^{(1)}\in\mathcal C([0,T_1(r+\epsilon)];\mathcal C^{2+\alpha}(\overline{\Omega}))\cap \mathcal C^{1+\alpha/2}([0,T_1(r+\epsilon)];\mathcal C(\overline{\Omega}))$. Then we introduce the parabolic DN map $\Lambda_{V_{j,\lambda}}$ associated with \eqref{ppeq3} given by $\Lambda_{V_{j,\lambda}}:h\mapsto\partial_\nu u_j^{(1)}|_{(0,T_1(r+\epsilon))\times\partial\Omega}$. Following \cite[Proposition 6.1]{CK}, we can prove the following.
\begin{lem}\label{ppl1} For $j=1,2$, $\lambda\in[-r,r]$, $h\in B_\epsilon$, the map $s\longmapsto u_{j,\lambda,s,h}$ admits a Fr\'echet derivative at $s=0$ in the sense of maps taking values in $\mathcal C([0,T_1(r+\epsilon)];\mathcal C^{2+\alpha}(\overline{\Omega}))\cap \mathcal C^{1+\alpha/2}([0,T_1(r+\epsilon)];\mathcal C(\overline{\Omega}))$ and we have 
$$\partial_su_{j,s,\lambda,g}|_{s=0}=u_j^{(1)}.$$
\end{lem}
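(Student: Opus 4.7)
The plan is to verify Fréchet differentiability at $s=0$ by controlling the remainder
\[
w_s := u_{j,\lambda,s,h} - u_{j,\lambda,0,h} - s\, u_j^{(1)}
\]
in the norm of the space $X := \mathcal{C}([0,T_1(r+\epsilon)];\mathcal{C}^{2+\alpha}(\overline{\Omega})) \cap \mathcal{C}^{1+\alpha/2}([0,T_1(r+\epsilon)];\mathcal{C}(\overline{\Omega}))$. Writing $u_s := u_{j,\lambda,s,h}$, $u_0 := u_{j,\lambda,0,h}$ and $\delta_s := u_s - u_0$, the hypothesis (H) guarantees $\|u_s\|_X + \|u_0\|_X \leq 2M(r+\epsilon)$ uniformly in $s\in(-1,1)$. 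A first step is to show the linear bound $\|\delta_s\|_X \leq C|s|$: indeed, subtracting the equations for $u_s$ and $u_0$, the function $\delta_s$ solves
\[
\partial_t \delta_s - \Delta \delta_s + b_s(t,x)\,\delta_s = 0, \qquad \delta_s|_{(0,T_1(r+\epsilon))\times\partial\Omega} = sh, \qquad \delta_s|_{t=0}=0,
\]
where $b_s(t,x) := \int_0^1 \partial_u F_j(t,x, u_0 + \sigma \delta_s)\,d\sigma$. Since $F_j\in\mathcal{C}^1([0,T]\times\overline{\Omega};\mathcal{C}^4(\R))$ and $u_0,\delta_s\in X$, the coefficient $b_s$ lies in $\mathcal{C}^{\alpha/2,\alpha}([0,T_1(r+\epsilon)]\times\overline{\Omega})$ with norm bounded independently of $s$. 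The parabolic Schauder estimate recalled from \cite[Theorem 5.4, pp. 322]{LSU} then yields $\|\delta_s\|_X \leq C|s|\,\|h\|_{\mathcal{H}_0}$.

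Next, I would exploit the regularity of $F_j$ in $u$ to compare $b_s$ with $V_{j,\lambda}$. Writing
\[
b_s - V_{j,\lambda} = \int_0^1 \bigl(\partial_u F_j(t,x, u_0+\sigma\delta_s) - \partial_u F_j(t,x, u_0)\bigr)\,d\sigma,
\]
one gets the pointwise bound $|b_s - V_{j,\lambda}| \leq C|\delta_s|$ and, more importantly, $\|b_s - V_{j,\lambda}\|_{\mathcal{C}^{\alpha/2,\alpha}} \leq C\|\delta_s\|_X \leq C'|s|$. Comparing the equation of $\delta_s$ with $s$ times the equation \eqref{ppeq3} satisfied by $u_j^{(1)}$, the remainder $w_s$ then solves the linear Cauchy problem
\[
\partial_t w_s - \Delta w_s + V_{j,\lambda}\, w_s = -(b_s - V_{j,\lambda})\,\delta_s \quad \text{in } (0,T_1(r+\epsilon))\times\Omega,
\]
with $w_s = 0$ on $(0,T_1(r+\epsilon))\times\partial\Omega$ (the boundary data $sh$ cancel by construction) and $w_s(0,\cdot) = 0$. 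The right hand side has $\mathcal{C}^{\alpha/2,\alpha}$-norm of order $|s|^2$ by the two estimates just obtained.

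Applying the parabolic Schauder estimate of \cite[Theorem 5.4, pp. 322]{LSU} a second time to this linear problem with $\mathcal{C}^1$ potential $V_{j,\lambda}$ and zero boundary and initial data, I conclude
\[
\|w_s\|_X \leq C\|(b_s-V_{j,\lambda})\,\delta_s\|_{\mathcal{C}^{\alpha/2,\alpha}} \leq C|s|^2,
\]
so that $\|w_s\|_X = o(|s|)$ as $s\to 0$, which is exactly the assertion that $u_j^{(1)}$ is the Fréchet derivative of $s\mapsto u_{j,\lambda,s,h}$ at $s=0$ in $X$. The main obstacle I expect is the Hölder bookkeeping for the composition $(t,x)\mapsto \partial_u F_j(t,x,u_0(t,x)+\sigma\delta_s(t,x))$ and the product $(b_s-V_{j,\lambda})\delta_s$: the chain rule for $\mathcal{C}^{\alpha/2,\alpha}$ functions and the uniform continuity of $\partial_u^2 F_j$ on the bounded set where $u_0+\sigma\delta_s$ ranges are exactly what make the $\mathcal{C}^4(\R)$-regularity hypothesis on $F_j$ usable, and this is what ensures the source term is quadratic in $|s|$ rather than merely linear.
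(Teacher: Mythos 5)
Your argument is correct: the remainder $w_s=u_{j,\lambda,s,h}-u_{j,\lambda,0,h}-su_j^{(1)}$ solves the homogeneous-data linear problem you write down, and the two-step Schauder estimate (first $\|\delta_s\|\leq C|s|$, then a source of order $|s|^2$) is exactly the linearization scheme the paper relies on --- it omits the proof of this lemma by citing \cite[Proposition 6.1]{CK}, but carries out the same difference-quotient-plus-Schauder computation in detail for the second linearization in Lemma \ref{ppl2}. The only quibble is bibliographic: the a priori estimate you invoke is \cite[Theorem 5.2, pp. 320]{LSU} (the paper cites Theorem 5.4 for existence), which does not affect the validity of the proof.
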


According to Lemma \ref{ppl1}, \eqref{t1c} implies 
$$\Lambda_{V_{1,\lambda}}=\mathcal N_{\lambda,r,F_1}'(0)=\mathcal N_{\lambda,r,F_2}'(0)=\Lambda_{V_{2,\lambda}},\quad \lambda\in[-r,r].$$
In a similar way to \cite{CK1,CK}, combining this identity with \cite[Theorem 1.1.]{CK}  we deduce that
\bel{t5f}\partial_uF_1\left(\cdot,u_{j,\lambda,s,h}|_{s=0}\right)=V_{1,\lambda}=V_{2,\lambda}=\partial_uF_2\left(\cdot,u_{j,\lambda,s,h}|_{s=0}\right),\quad \lambda\in[-r,r].\ee
For all $\lambda\in[-r,r]$, let us consider $v_{j,\lambda}\in\mathcal C([0,T_1(r+\epsilon)];\mathcal C^{2+\alpha}(\overline{\Omega}))\cap \mathcal C^{1+\alpha/2}([0,T_1(r+\epsilon)];\mathcal C(\overline{\Omega}))$ the solution of 
\bel{ppeq4}
\left\{
\begin{array}{ll}
\partial_tv_{j,\lambda}-\Delta v_{j,\lambda}+F_j(t,x,v_{j,\lambda})=0  & \mbox{in}\ (0,T_1(r+\epsilon))\times\Omega ,
\\
v_{j,\lambda}(t,x)= \lambda\chi(t,x)&(t,x)\in(0,T_1(r+\epsilon))\times\partial\Omega,\\
v_{j,\tau}(0,x)=0  &x\in\Omega.
\end{array}
\right.
\ee
Then, the identity \eqref{t5f} can be rewritten as
\bel{t5g}\partial_uF_1\left(t,x,v_{1,\lambda}(t,x)\right)=\partial_uF_2\left(t,x,v_{2,\lambda}(t,x)\right),\quad (t,x,\lambda)\in(0,T_1(r+\epsilon))\times\Omega\times[-r,r].\ee
Now let us consider $v_{j,\lambda}^{(1)}\in\mathcal C([0,T_1(r+\epsilon)];\mathcal C^{2+\alpha}(\overline{\Omega}))\cap \mathcal C^{1+\alpha/2}([0,T_1(r+\epsilon)];\mathcal C(\overline{\Omega}))$  solving the  problem 
\bel{ppeq5}
\left\{
\begin{array}{ll}
\partial_tv_{j,\lambda}^{(1)}-\Delta v_{j,\lambda}^{(1)}+V_{j,\lambda}v_{j,\lambda}^{(1)}=0  & \mbox{in}\ (0,T_1(r+\epsilon))\times\Omega ,
\\
v_{j,\lambda}^{(1)}(t,x)= \chi(t,x)&(t,x)\in(0,T_1(r+\epsilon))\times\partial\Omega,\\
v_{j,\lambda}^{(1)}(0,x)=0  &x\in\Omega.
\end{array}
\right.
\ee
Repeating the arguments used in Lemma \ref{ppl1}, one can check that $\partial_\lambda v_{j,\lambda}=v_{j,\lambda}^{(1)}$, $j=1,2$, $\lambda\in(-r,r)$. Moreover, the identity \eqref{t5f} implies that $v_{1,\lambda}^{(1)}=v_{2,\lambda}^{(1)}$, since they both solves the same initial boundary value problem which admits a unique solution. Combining this with the fact that condition \eqref{t1a} and the uniqueness of solution of \eqref{ppeq4} imply that $v_{1,0}=v_{2,0}=0$, we deduce that
$$v_{1,\lambda}=\int_0^\lambda v_{1,\tau}^{(1)}d\tau=\int_0^\lambda v_{2,\tau}^{(1)}d\tau=v_{2,\lambda},\quad \lambda\in[-r,r].$$
According to this identity, fixing $G:=\{(t,x,v_{1,\lambda}(t,x)):\ x\in\Omega,\ t\in(0,T_1(r+\epsilon)),\ \lambda\in[-r,r]\}$, the consequence of the first linearization of the inverse problem stated in Theorem \ref{t1} can be stated as follows 
$$ F_1(t,x,s)=F_2(t,x,s),\quad (t,x,s)\in G.$$
In view of this identity, the proof of Theorem \ref{t1} will be completed if we prove that exists a constant $a_1>0$, depending only on $\Omega$, $\delta_1$, $T$, $q$ and $\chi$, such that $(\delta_1,T_1(r+\epsilon))\times\Omega\times[-a_1r,a_1r]\subset G$.  For this purpose, we introduce $v_{1,\lambda}^{(2)}$ solving the following problem
\bel{ppeq6}
\left\{
\begin{array}{ll}
\partial_tv_{1,\lambda}^{(2)}-\Delta v_{1,\lambda}^{(2)}+V_{j,\lambda}v_{1,\lambda}^{(2)}=-\partial_u^2F_1(t,x,v_{1,\lambda})\left(v_{1,\lambda}^{(1)}\right)^2  & \mbox{in}\ (0,T_1(r+\epsilon))\times\Omega ,
\\
v_{1,\lambda}^{(2)}=0 &\mbox{on}\ (0,T_1(r+\epsilon))\times\partial\Omega,\\
v_{1,\lambda}^{(2)}(0,x)=0  &x\in\Omega.
\end{array}
\right.
\ee
 The second linearization of problem \eqref{ppeq4} will give us the following result.

\begin{lem}\label{ppl2} The map $\lambda\longmapsto v_{1,\lambda}^{(1)}$ admits a Fr\'echet derivative in $(-r,r)$ and we have 
\bel{ppl2a}\partial_\lambda v_{1,\lambda}^{(1)}=v_{1,\lambda}^{(2)},\quad \lambda\in(-r,r),\ee
in the sense of functions taking values in $\mathcal C([0,T_1(r+\epsilon)];\mathcal C^{2+\alpha}(\overline{\Omega}))\cap \mathcal C^{1+\alpha/2}([0,T_1(r+\epsilon)];\mathcal C(\overline{\Omega}))$.
\end{lem}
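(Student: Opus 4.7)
The plan is to repeat the strategy underlying Lemma \ref{ppl1}, now at the level of the first linearized family $\{v_{1,\lambda}^{(1)}\}_\lambda$. Set
\[
E:=\mathcal C([0,T_1(r+\epsilon)];\mathcal C^{2+\alpha}(\overline{\Omega}))\cap \mathcal C^{1+\alpha/2}([0,T_1(r+\epsilon)];\mathcal C(\overline{\Omega})),
\]
fix $\lambda\in(-r,r)$, and for $|\eta|$ small enough that $\lambda+\eta\in(-r,r)$, introduce the remainder
\[
w_{\lambda,\eta}:=v_{1,\lambda+\eta}^{(1)}-v_{1,\lambda}^{(1)}-\eta\, v_{1,\lambda}^{(2)}.
\]
The goal is to show $\norm{w_{\lambda,\eta}}_E=o(|\eta|)$ as $\eta\to 0$, which is exactly \eqref{ppl2a}.

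Subtracting \eqref{ppeq5} at parameter $\lambda$ from \eqref{ppeq5} at parameter $\lambda+\eta$ and using \eqref{ppeq6} to absorb the action of $\partial_t-\Delta+V_{1,\lambda}$ on $\eta\, v_{1,\lambda}^{(2)}$, a direct computation shows that $w_{\lambda,\eta}$ solves
\[
(\partial_t-\Delta+V_{1,\lambda})w_{\lambda,\eta}=\Phi_{\lambda,\eta},\quad w_{\lambda,\eta}|_{(0,T_1(r+\epsilon))\times\partial\Omega}=0,\quad w_{\lambda,\eta}(0,\cdot)=0,
\]
with source
\[
\Phi_{\lambda,\eta}:=(V_{1,\lambda}-V_{1,\lambda+\eta})\,v_{1,\lambda+\eta}^{(1)}+\eta\,\partial_u^2F_1(\cdot,v_{1,\lambda})\bigl(v_{1,\lambda}^{(1)}\bigr)^2.
\]
The linear Schauder estimate for parabolic equations (see \cite[Theorem 5.4, pp. 322]{LSU}) reduces the claim to showing that $\Phi_{\lambda,\eta}$ is $o(|\eta|)$ in the parabolic Hölder norm $\mathcal C^{\alpha/2,\alpha}([0,T_1(r+\epsilon)]\times\overline{\Omega})$. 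Writing $\delta_\eta:=v_{1,\lambda+\eta}-v_{1,\lambda}$ and applying the integral Taylor formula to the $\mathcal C^4(\R)$-dependence of $\partial_uF_1$ in its third argument, one has
\[
V_{1,\lambda+\eta}-V_{1,\lambda}=\delta_\eta\int_0^1\partial_u^2F_1(\cdot,v_{1,\lambda}+\sigma\delta_\eta)\,d\sigma.
\]
The differentiability $\partial_\lambda v_{1,\lambda}=v_{1,\lambda}^{(1)}$ already established in the preceding paragraphs (by repeating the argument of Lemma \ref{ppl1} with $h=\chi$) yields the expansion $\delta_\eta=\eta\, v_{1,\lambda}^{(1)}+R_\eta$ with $\norm{R_\eta}_E=o(|\eta|)$, together with the continuity $v_{1,\lambda+\eta}^{(1)}\to v_{1,\lambda}^{(1)}$ in $E$. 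Substituting into $\Phi_{\lambda,\eta}$, the leading contribution of $(V_{1,\lambda}-V_{1,\lambda+\eta})v_{1,\lambda+\eta}^{(1)}$ equals $-\eta\,\partial_u^2F_1(\cdot,v_{1,\lambda})(v_{1,\lambda}^{(1)})^2$, which exactly cancels the second term of $\Phi_{\lambda,\eta}$, leaving only terms that are $o(|\eta|)$.

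The main obstacle is to carry out this cancellation not merely in $L^\infty$ but in the parabolic Hölder topology required by the Schauder estimate. This demands uniform $\mathcal C^{\alpha/2,2+\alpha}$ bounds on $v_{1,\lambda+\eta}$ and $v_{1,\lambda+\eta}^{(1)}$ for $\eta$ in a neighborhood of $0$, which follow from the uniform estimate \eqref{ess1} combined with linear Schauder theory applied to \eqref{ppeq5}, together with Hölder continuity of the Nemytskii maps $u\mapsto \partial_u^kF_1(\cdot,u)$ for $k\le 2$, which is granted by $F_1\in\mathcal C^1([0,T]\times\overline{\Omega};\mathcal C^4(\R))$. Once these Hölder estimates are in place, we obtain $\norm{\Phi_{\lambda,\eta}}_{\mathcal C^{\alpha/2,\alpha}}=o(|\eta|)$, and the Schauder estimate delivers $\norm{w_{\lambda,\eta}}_E=o(|\eta|)$, establishing \eqref{ppl2a}.
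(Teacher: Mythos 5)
Your argument is correct and follows essentially the same route as the paper: you form the first-order remainder of $v_{1,\lambda}^{(1)}$ in $\lambda$, identify the linear parabolic problem it solves with zero initial and lateral data, show the source is $o(|\eta|)$ via the integral Taylor formula for $V_{1,\lambda+\eta}-V_{1,\lambda}$ together with the expansion $v_{1,\lambda+\eta}-v_{1,\lambda}=\eta v_{1,\lambda}^{(1)}+o(|\eta|)$, and conclude by linear Schauder estimates. The one point to tighten is that the continuity of $\lambda\mapsto v_{1,\lambda}^{(1)}$ in the Hölder space is not a formal consequence of the differentiability of $\lambda\mapsto v_{1,\lambda}$; as in the first half of the paper's proof, it requires its own short argument, namely applying the Schauder estimate to $v_{1,\lambda+\eta}^{(1)}-v_{1,\lambda}^{(1)}$, whose source $(V_{1,\lambda}-V_{1,\lambda+\eta})v_{1,\lambda+\eta}^{(1)}$ tends to zero in the relevant norm.
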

\begin{proof} We start by proving that  the map $(-r,r)\ni\lambda\longmapsto v_{1,\lambda}^{(1)}\in\mathcal C([0,T_1(r+\epsilon)];\mathcal C^{2+\alpha}(\overline{\Omega}))\cap \mathcal C^{1+\alpha/2}([0,T_1(r+\epsilon)];\mathcal C(\overline{\Omega}))$ is continuous. For this purpose, we fix $\lambda\in(-r,r)$, $\delta\in(|\lambda|-r,r-|\lambda|)\setminus\{0\}$ and consider
$y=v_{1,\lambda+\delta}^{(1)}-v_{1,\lambda}^{(1)}$. It is clear that $y$ solves

$$\left\{
\begin{array}{ll}
\partial_ty-\Delta y+V_{j,\lambda}y=G_{\delta,\lambda}  & \mbox{in}\ (0,T_1(r+\epsilon))\times\Omega ,
\\
y=0 &\mbox{on}\ (0,T_1(r+\epsilon))\times\partial\Omega,\\
y(0,x)=0  &x\in\Omega.
\end{array}\right.$$
with
$$G_{\delta,\lambda}=(V_{1,\lambda}-V_{1,\lambda+\delta})v_{1,\lambda+\delta}^{(1)}.$$
From now on we denote by $\norm{\cdot}_{2+\alpha,1+\alpha/2}$ the norm of $\mathcal C([0,T_1(r+\epsilon)];\mathcal C^{2+\alpha}(\overline{\Omega}))\cap \mathcal C^{1+\alpha/2}([0,T_1(r+\epsilon)];\mathcal C(\overline{\Omega}))$ defined by
$$\norm{f}_{2+\alpha,1+\alpha/2}=\max\left(\norm{f}_{\mathcal C([0,T_1(r+\epsilon)];\mathcal C^{2+\alpha}(\overline{\Omega}))},\norm{f}_{\mathcal C^{1+\alpha/2}([0,T_1(r+\epsilon)];\mathcal C(\overline{\Omega}))}\right).$$
In view of \eqref{ess1}, one can check that
\bel{pl2b} M_1(\lambda)=\sup_{\delta\in(|\lambda|-r,r-|\lambda|)}\left( \norm{v_{1,\lambda+\delta}^{(1)}}_{2+\alpha,1+\alpha/2}+\norm{v_{1,\lambda+\delta}}_{2+\alpha,1+\alpha/2}\right)<\infty.\ee
Moreover, the mean value theorem implies that 
$$\begin{aligned}&\norm{V_{1,\lambda}-V_{1,\lambda+\delta}}_{L^\infty((0,T_1(r+\epsilon))\times\Omega)}\\
&\leq \left(\sup_{(t,x)\in(0,T_1(r+\epsilon))\times\Omega}\sup_{s\in[-M_1(\lambda),M_1(\lambda)]}|\partial_u^2 F_1(t,x,s)|\right) \norm{v_{1,\lambda+\delta}-v_{1,\lambda}}_{L^\infty((0,T_1(r+\epsilon))\times\Omega)}\\
&\leq C(\lambda)\norm{v_{1,\lambda+\delta}-v_{1,\lambda}}_{L^\infty((0,T_1(r+\epsilon))\times\Omega)}\end{aligned}$$
In the same way, we can prove that, for all $k\in\mathbb N$ and all $\beta\in\mathbb N^n$ such that $|\beta|+k=1$,   we have
$$\begin{aligned}&\norm{\partial_t^k\partial_x^\beta V_{1,\lambda}-\partial_t^k\partial_x^\beta V_{1,\lambda+\delta}}_{L^\infty((0,T_1(r+\epsilon))\times\Omega)}\\
&\leq 2\left(\sup_{(t,x)\in(0,T_1(r+\epsilon))\times\Omega}\sup_{s\in[-M_1(\lambda),M_1(\lambda)]}|(\partial_u^2 \partial_t^k\partial_x^\beta F_1(t,x,s)|+|\partial_u^2 F_1(t,x,s)|)\right) \norm{v_{1,\lambda+\delta}-v_{1,\lambda}}_{W^{1,\infty}((0,T_1(r+\epsilon))\times\Omega)}\\
&\ \ +M_1\left(\sup_{(t,x)\in(0,T_1(r+\epsilon))\times\Omega}\sup_{s\in[-M_1(\lambda),M_1(\lambda)]}|\partial_u^3 F_1(t,x,s)|\right) \norm{v_{1,\lambda+\delta}-v_{1,\lambda}}_{L^\infty((0,T_1(r+\epsilon))\times\Omega)}\\
&\leq C(\lambda)\norm{v_{1,\lambda+\delta}-v_{1,\lambda}}_{L^\infty((0,T_1(r+\epsilon))\times\Omega)}.\end{aligned}$$
Combining this with  Lemma \ref{ppl1}, we deduce that
\bel{pl2c}\lim_{\delta\to0}\norm{G_{\delta,\lambda}}_{\mathcal C^1([0,T_1(r+\epsilon)]\times\overline{\Omega})}=0.\ee
In addition, applying  \cite[Theorem 5.2, p.p. 320]{LSU}, we get
$$\norm{v_{1,\lambda+\delta}^{(1)}-v_{1,\lambda}^{(1)}}_{2+\alpha,1+\alpha/2}=\norm{y}_{2+\alpha,1+\alpha/2}\leq C(\norm{G_{\delta,\lambda}}_{\mathcal C([0,T_1(r+\epsilon)];\mathcal C^{\alpha}(\overline{\Omega}))}+\norm{G_{\delta,\lambda}}_{\mathcal C^{\alpha/2}([0,T_1(r+\epsilon)];\mathcal C(\overline{\Omega}))}).$$
Combining this with \eqref{pl2c}, we deduce that $(-r,r)\ni\lambda\longmapsto v_{1,\lambda}^{(1)}\in\mathcal C([0,T_1(r+\epsilon)];\mathcal C^{2+\alpha}(\overline{\Omega}))\cap \mathcal C^{1+\alpha/2}([0,T_1(r+\epsilon)];\mathcal C(\overline{\Omega}))$ is continuous. Now let us consider
$$z=\frac{v_{1,\lambda+\delta}^{(1)}-v_{1,\lambda}^{(1)}}{\delta}-v_{1,\lambda}^{(2)}.$$
It is clear that $z$ solves the boundary value problem
\bel{pl2d}
\left\{
\begin{array}{ll}
\partial_tz-\Delta z+V_{1,\lambda}(t,x)z=L_{\delta,\lambda}  & \mbox{in}\ (0,T_1(r+\epsilon))\times\Omega ,
\\
z=0 &\mbox{on}\ (0,T_1(r+\epsilon))\times\partial\Omega,\\
z(0,x)=0  &x\in\Omega,
\end{array}
\right.
\ee
with
$$L_{\delta,\tau}=\partial_u^2F_1(\cdot,v_{1,\lambda})\left(v_{1,\lambda}^{(1)}\right)^2+\frac{V_{1,\lambda}-V_{1,\lambda+\delta}}{\delta}v_{1,\lambda+\delta}^{(1)}.$$
On the other hand, we have
$$ L_{\delta,\tau}=\partial_u^2F_1(\cdot,v_{1,\lambda})\left(v_{1,\lambda}^{(1)}\right)^2-\left(\int_0^1\partial_u^2F_1(\cdot,s v_{1,\lambda+\delta}+(1-s)v_{1,\lambda})ds\right)\left(\frac{v_{1,\lambda+\delta}-v_{1,\lambda}}{\delta}\right)v_{1,\lambda+\delta}^{(1)}.$$
In addition, the continuity of the map $(-r,r)\ni\lambda\longmapsto v_{1,\lambda}^{(1)}\in\mathcal C([0,T_1(r+\epsilon)];\mathcal C^{2+\alpha}(\overline{\Omega}))\cap \mathcal C^{1+\alpha/2}([0,T_1(r+\epsilon)];\mathcal C(\overline{\Omega}))$ and Lemma \ref{ppl1} imply that
\bel{pl2e}\lim_{\delta\to0}\norm{\frac{v_{1,\lambda+\delta}-v_{1,\lambda}}{\delta}-v_{1,\lambda}^{(1)}}_{W^{1,\infty}((0,T_1(r+\epsilon))\times\Omega)}=0,\ee
\bel{pl2f}\lim_{\delta\to0}\norm{v_{1,\lambda+\delta}^{(1)}-v_{1,\lambda}^{(1)}}_{W^{1,\infty}((0,T_1(r+\epsilon))\times\Omega)}=0.\ee
Applying the mean value theorem and Lemma \ref{ppl1}, we deduce that
$$\begin{aligned}&\norm{\int_0^1\partial_u^2F_1(\cdot,s v_{1,\lambda+\delta}+(1-s)v_{1,\lambda})ds-\partial_u^2F_1(\cdot,v_{1,\lambda})ds}_{W^{1,\infty}((0,T_1(r+\epsilon))\times\Omega)}\\
&\leq \int_0^1\norm{\partial_u^2F_1(\cdot,s v_{1,\lambda+\delta}+(1-s)v_{1,\lambda})-\partial_u^2F_1(\cdot,v_{1,\lambda})}_{W^{1,\infty}((0,T_1(r+\epsilon))\times\Omega)}ds\\
&\leq  \left(\max_{|\beta|+k\leq 1}\sup_{x\in\Omega}\sup_{\tau\in[-M_1(\lambda),M_1(\lambda)]}|\partial_u^3 \partial_x^\beta\partial_t^k F_1(t,x,\tau)|+|\partial_u^3  F_1(t,x,\tau)|\right)\\
&\ \ \ \times\norm{v_{1,\lambda+\delta}-v_{1,\lambda}}_{W^{1,\infty}((0,T_1(r+\epsilon))\times\Omega)}\\
&\ \ \ + M_1(\lambda)\left(\sup_{x\in\Omega}\sup_{\tau\in[-M_1(\lambda),M_1(\lambda)]}|\partial_u^4  F_1(t,x,\tau)|\right)\norm{v_{1,\lambda+\delta}-v_{1,\lambda}}_{L^\infty((0,T_1(r+\epsilon))\times\Omega)}.\end{aligned}$$
It follows that
$$\lim_{\delta\to0}\norm{\int_0^1\partial_u^2F_1(\cdot,s v_{1,\lambda+\delta}+(1-s)v_{1,\lambda})ds-\partial_u^2F_1(\cdot,v_{1,\lambda})}_{W^{1,\infty}((0,T_1(r+\epsilon))\times\Omega)}=0.$$
Combining this with \eqref{pl2e}-\eqref{pl2f}, we deduce that
$$\lim_{\delta\to0}\norm{L_{\delta,\lambda}}_{W^{1,\infty}((0,T_1(r+\epsilon))\times\Omega)}=0$$
and applying again \cite[Theorem 5.2, p.p. 320]{LSU}, we get
$$\norm{\frac{v_{1,\lambda+\delta}^{(1)}-v_{1,\lambda}^{(1)}}{\delta}-v_{1,\lambda}^{(2)}}_{2+\alpha,1+\alpha/2}=\norm{z}_{2+\alpha,1+\alpha/2}\leq C\norm{L_{\delta,\lambda}}_{W^{1,\infty}((0,T_1(r+\epsilon))\times\Omega)}.$$
This last inequality clearly implies \eqref{ppl2a}.
\end{proof}
\subsection{Completion of the proof of Theorem \ref{t1}}
Using the second linearization  of our problem given by Lemma \ref{ppl2}, we will complete the proof of Theorem \ref{t1} by showing that there  exists a constant $a_1>0$, depending only on $\Omega$, $T$, $q$, $\delta_1$ and $\chi$, such that $(\delta_1,T_1(r+\epsilon))\times\Omega\times(-a_1r,a_1r)\subset G$. Here $q\in L^\infty((0,T)\times\Omega)$ is given by \eqref{t1d}. For this purpose, we fix $x_0\in\Omega$, $t_0\in(\delta_1,T_1(r+\epsilon))$  and we will show that there  exists a constant $a_1>0$, depending only on $\Omega$, $T$, $q$, $\delta_1$ and $\chi$, such that the set
$G_{(t_0,x_0)}:=\{v_{1,\lambda}(t_0,x_0):\ \lambda\in(-r,r)\}$ contains the set $(-a_1r,a_1r)$. Since $\lambda\mapsto v_{1,\lambda}(t_0,x_0)\in\mathcal C([-r,r])$, the proof will be completed if we prove that
\bel{t5i}\pm v_{1,\pm r}(t_0,x_0)\geq\pm a_1r.\ee

We start by proving \eqref{t5i} for $\pm$ replaced by $+$. We fix $\lambda\in [-r,r]$ and we consider  $M_\lambda=\norm{V_{1,\lambda}}_{ L^\infty((0,T_1(r+\epsilon))\times\Omega)}$ and $\widetilde{v_{1,\lambda}}^{(1)}=e^{-M_\lambda t}v_{1,\lambda}^{(1)}$. One can check that $\widetilde{v_{1,\lambda}}^{(1)}$ satisfies
$$\left\{
\begin{array}{ll}
\partial_t\widetilde{v_{1,\lambda}}^{(1)}-\Delta \widetilde{v_{1,\lambda}}^{(1)}+(V_{1,\lambda}+M_\lambda)\widetilde{v_{1,\lambda}}^{(1)}=0  & \mbox{in}\ (0,T_1(r+\epsilon))\times\Omega,
\\
\widetilde{v_{1,\lambda}}^{(1)}(t,x)=e^{-M_\lambda t}\chi(t,x)  &(t,x)\in(0,T_1(r+\epsilon))\times\partial\Omega,\\
\widetilde{v_{1,\lambda}}^{(1)}(0,x)=0  &x\in\Omega.
\end{array}
\right.$$

Using the fact that $V_{1,\lambda}+M_\lambda\geq0$, $\chi\geq0$ and applying the weak maximum principle for parabolic equations (e.g. \cite[Theorem 9, pp. 369]{Ev}) to $\widetilde{v_{1,\lambda}}^{(1)}$ we deduce  that $$ v_{1,\lambda}^{(1)} (t,x)=e^{M_\lambda t}\widetilde{v_{1,\lambda}}^{(1)}(t,x)\geq0,\quad \lambda\in[-r,r],\  x\in\Omega,\ t\in(0,T_1(r+\epsilon)).$$ Combining this with Lemma \ref{ppl1} and the fact that, thanks to \eqref{t1a}, $v_{1,\lambda}|_{\lambda=0}=0$,  we find $$ v_{1,\lambda}(t,x)\geq 0,\quad \lambda\in[0,r],\ x\in\Omega,\ t\in(0,T_1(r+\epsilon)).$$
Thus, in light of \eqref{t1b}, we have
$$\partial_u^2F_1(t,x,v_{1,\lambda}(t,x))\leq 0,\quad \lambda\in[0,r],\ x\in\Omega,\ t\in(0,T_1(r+\epsilon)).$$
Then, in view of \eqref{ppeq6}, the weak maximum principle for parabolic equations (e.g. \cite[Theorem 9, pp. 369]{Ev}) combined with the above argumentation  imply that
$$v_{1,\lambda}^{(2)}(t,x)\geq 0,\quad \lambda\in[0,r],\ x\in\Omega,\ t\in(0,T_1(r+\epsilon)T).$$
Combining this with Lemma \ref{ppl2}, we deduce that
$$v_{1,\lambda}^{(1)} (t_0,x_0)\geq v_{1,0}^{(1)}(t_0,x_0),\quad \lambda\in[0,r].$$
and  we find
\bel{t5k}v_{1,\lambda} (t_0,x_0)=\int_0^\lambda v_{1,\tau}^{(1)} (t_0,x_0)d\tau \geq v_{1,0}^{(1)}(t_0,x_0)\lambda,\quad \lambda\in[0,r].\ee
In view of \eqref{t5k}, the proof will be completed if we show that there exists a constant $a_1>0$, depending only on $\Omega$, $T$, $q$, $\delta_1$ and $\chi$, such that
\bel{t1j} v_{1,0}^{(1)}(t_0,x_0)\geq a_1.\ee
For this purpose, let us fix $w\in H^1(0,T;H^{-1}(\Omega))\cap L^2(0,T;H^1(\Omega))$ solving
\bel{ww}\left\{
\begin{array}{ll}             
\partial_tw-\Delta w+qw=0  & \mbox{in}\ (0,T)\times\Omega,
\\
w(t,x)=\chi(t,x)  &(t,x)\in(0,T)\times\partial\Omega,\\
w(0,x)=0  &x\in\Omega.
\end{array}
\right.\ee
Since $q\in W^{1,\infty}((0,T)\times\Omega)$, \cite[Theorem 5.4, pp. 322]{LSU} implies that
$w\in \mathcal C([0,T];\mathcal C^{2+\alpha}(\overline{\Omega}))\cap \mathcal C^{1+\alpha/2}([0,T];\mathcal C(\overline{\Omega}))$.  We fix $w_1=v_{1,0}^{(1)}-w$ and we notice that $w_1$ solves 
$$\left\{
\begin{array}{ll}             
\partial_tw_1-\Delta w_1+V_{1,0}w_1=(q-V_{1,0})w  & \mbox{in}\ (0,T_1(r+\epsilon))\times\Omega,
\\
w_1(t,x)=0  &(t,x)\in(0,T_1(r+\epsilon))\times\partial\Omega,\\
w_1(0,x)=0  &x\in\Omega.
\end{array}
\right.$$
On the other hand, we have
$$q(t,x)-V_{1,0}(t,x)=q(t,x)-\partial_uF_1(t,x,v_{1,0}(t,x)),\quad (t,x)\in(0,T_1(r+\epsilon))\times\Omega$$
and from \eqref{t1a} we deduce that $v_{1,0}=0$. Thus, condition \eqref{t1d} implies that
$$q(t,x)-V_{1,0}(t,x)=q(t,x)-\partial_uF_1(t,x,0)\geq 0,\quad (t,x)\in(0,T_1(r+\epsilon))\times\Omega.$$
Moreover, since $\chi\geq0$, applying the weak maximum principle we deduce that $w\geq0$. It follows that $(q-V_{1,0})w\geq0$ and the weak maximum principle implies that $v_{1,0}^{(1)}-w=w_1\geq0$. Therefore, we have $v_{1,0}^{(1)}(t_0,x_0)\geq w(t_0,x_0)$ and the estimate \eqref{t1j} holds true with
\bel{a1}a_1:=\inf_{(t,x)\in (\delta_1,T)\times\Omega}w(t,x).\ee

Therefore, in order to complete the proof of the theorem we only need to show that $a_1>0$. For this purpose, let us assume the contrary. 
Since $w\in \mathcal C([\delta_1,T]\times\overline{\Omega})$, there exists $(t_1,x_1)\in [\delta_1,T]\times\overline{\Omega}$ such that $a_1=w(t_1,x_1)$. Since $w\geq0$, we have $a_1=w(t_1,x_1)=0$. Using the fact that 
$$w(t,x)=\chi(t,x)=\delta_2>0,\quad (t,x)\in [\delta_1,T]\times\partial\Omega,$$
we deduce that $x_1\in\Omega$. Thus, we can fix $\delta_3>0$ such that, for $B(x_1,\delta_3):=\{x\in\R^n:\ |x-x_1|<\delta_3\}$, we have $\overline{B(x_1,\delta_3)}\subset \Omega$ and we set $\delta_4\leq \min(\delta_1/2,(t_1-\delta_1)/2)$. Using the fact that $w\geq0$ and applying the parabolic Harnack inequality (e.g. \cite[Theorem 10 pp. 370]{Ev}), we deduce that, for all $t_2\in (\delta_1-\delta_4,\delta_1+\delta_4)$, we get
$$\sup_{x\in B(x_1,\delta_3)}w(t_2,x)\leq C\inf_{x\in B(x_1,\delta_3)}w(t_1,x)=0,$$
where $C$ depends on $t_1$, $t_2$, $x_1$, $\delta_3$, $\Omega$ and $q$. Therefore, we have
\bel{t5m}w(t,x)=0,\quad x\in B(x_1,\delta_3),\ t\in (\delta_1-\delta_4,\delta_1+\delta_4).\ee
Using the fact that $\Omega$ is connected and applying results of unique continuation for parabolic equations (e.g.  \cite[Theorem 1.1]{SS}), we deduce from \eqref{t5m}  that 
$$w(t,x)=0,\quad x\in \Omega,\ t\in (\delta_1-\delta_4,\delta_1+\delta_4).$$
Combining this with \eqref{ppeq5}, for any $x_2\in\partial\Omega$, we deduce that $\chi(\delta_1,x_2)=w(\delta_1,x_2)=0$ which contradicts the fact that $\chi(\delta_1,x_2)=\delta_2>0$. Thus, we have $a_1>0$ and the proof of \eqref{t5i} for $\pm$ replaced by $+$ is completed.

Now let us consider\eqref{t5i} for $\pm$ replaced by $-$. Repeating the above argumentation and applying \eqref{t1b}, we deduce that
$$\partial_u^2F_1(t,x,v_{1,\lambda}(t,x))\geq 0,\quad \lambda\in(-r,0),\ x\in\Omega,\ t\in(0,T_1(r+\epsilon)).$$
Applying the weak maximum principle for parabolic equations, we get
$$v_{1,\lambda}^{(2)}(t,x)\leq 0,\quad \lambda\in(-r,0),\ x\in\Omega,\ t\in(0,T_1(r+\epsilon))$$
and Lemma \ref{ppl2} implies that
$$v_{1,\lambda}^{(1)} (t_0,x_0)\geq v_{1,0}^{(1)}(t_0,x_0),\quad \lambda\in(-r,0).$$
Thus, we find
\bel{t1z}v_{1,\lambda} (t_0,x_0)=\int_0^\lambda v_{1,\tau}^{(1)} (t_0,x_0)d\tau\leq v_{1,0}^{(1)}(t_0,x_0)\lambda,\quad \lambda\in[-r,0],\ee
which, combined with  \eqref{t1j},  imply \eqref{t5i}. Therefore, for all $x_0\in\Omega$, $t_0\in(\delta_1,T_1(r+\epsilon))$ we have $[-a_1r_1,a_1r]\subset G_{(t_0,x_0)}$ and we deduce that $(\delta_1,T_1(r+\epsilon))\times\Omega\times[-a_1r_1,a_1r]\subset G$. This completes the proof of  Theorem \ref{t1}.

\section{Stable recovery of semilinear term}

This section is devoted to the proof of the stability result stated in Theorem \ref{t2} associated with the uniqueness result stated in Theorem \ref{t1}. For this purpose, we assume that the conditions of Theorem \ref{t1} are fulfilled and we would like to prove \eqref{t2b}. We start by considering the following intermediate result.

\begin{lem}\label{l4} Let the conditions of Theorem \ref{t1} be fulfilled, condition \eqref{t2a} be fulfilled, $r>0$ and consider $v_{1,\lambda}$, $\lambda\in[-r,r]$ the solution of \eqref{ppeq4}. Then, there exists $a_2>0$, depending only on $\Omega$, $T$, $\delta_1$, $\chi$ and $\kappa(0)$, such that, for any $(t,x)\in(\delta_1,T_1(r+\epsilon))\times\Omega$ and any $s\in[-a_2r,a_2r]$, there exists $\lambda_{t,x,s}\in[-r,r]$ with  $v_{1,\lambda_{t,x,s}}(t,x)=s$ \end{lem}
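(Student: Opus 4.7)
The plan is to reduce the claim to an intermediate-value argument: for each fixed $(t,x)\in(\delta_1,T_1(r+\epsilon))\times\Omega$, the map $\lambda\mapsto v_{1,\lambda}(t,x)$ is continuous on $[-r,r]$ (by Lemma \ref{ppl1}, which gives $\partial_\lambda v_{1,\lambda}=v_{1,\lambda}^{(1)}$), and by \eqref{t1a} and the uniqueness of solutions of \eqref{ppeq4} we have $v_{1,0}\equiv 0$. Hence if I can produce a constant $a_2>0$, depending only on $\Omega$, $T$, $\delta_1$, $\chi$ and $\kappa(0)$, such that $v_{1,r}(t,x)\geq a_2 r$ and $v_{1,-r}(t,x)\leq -a_2 r$ throughout the cylinder, the intermediate value theorem will deliver the required $\lambda_{t,x,s}$ for every $s\in[-a_2 r,a_2 r]$.

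For the lower bound on $v_{1,r}(t,x)$, I would replay the convexity chain used at the end of the proof of Theorem \ref{t1} verbatim. The weak maximum principle applied after the exponential shift $\widetilde v_{1,\lambda}^{(1)}=e^{-M_\lambda t}v_{1,\lambda}^{(1)}$ yields $v_{1,\lambda}^{(1)}\geq 0$, hence $v_{1,\lambda}\geq 0$ for $\lambda\in[0,r]$. The first inequality in \eqref{t1b} then gives $\partial_u^2 F_1(t,x,v_{1,\lambda}(t,x))\leq 0$, so by \eqref{ppeq6} and the weak maximum principle $v_{1,\lambda}^{(2)}\geq 0$ on the whole cylinder. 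Using Lemma \ref{ppl2} and integrating in $\lambda$ this produces
\begin{equation*}
v_{1,\lambda}(t,x)\;\geq\;\lambda\,v_{1,0}^{(1)}(t,x),\qquad \lambda\in[0,r],
\end{equation*}
with the mirror inequality $v_{1,\lambda}(t,x)\leq\lambda\,v_{1,0}^{(1)}(t,x)$ for $\lambda\in[-r,0]$ obtained from the second half of \eqref{t1b}. It therefore remains to bound $v_{1,0}^{(1)}(t,x)$ from below by a constant depending only on the listed parameters, uniformly in $r$ and $F_1$.

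The key simplification at $\lambda=0$ is that $v_{1,0}\equiv 0$ forces $V_{1,0}(t,x)=\partial_u F_1(t,x,0)$, which by \eqref{t2a} satisfies $V_{1,0}(t,x)\leq \kappa(0)$ in $L^\infty$ with $W^{1,\infty}$ control. I would therefore introduce the reference function $y$ solving
\begin{equation}\label{eqy}
\left\{\begin{array}{ll}
\partial_t y-\Delta y+\kappa(0)y=0 & \text{in }(0,T)\times\Omega,\\
y=\chi & \text{on }(0,T)\times\partial\Omega,\\
y(0,\cdot)=0 & \text{in }\Omega,
\end{array}\right.
\end{equation}
which depends only on $\Omega$, $T$, $\chi$ and $\kappa(0)$, and whose parabolic regularity is assured by \cite[Theorem 5.4, pp. 322]{LSU}. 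The difference $v_{1,0}^{(1)}-y$ solves a parabolic equation with potential $V_{1,0}$, zero lateral and initial data and forcing $(\kappa(0)-V_{1,0})y\geq 0$ (using $\chi\geq 0$ and the maximum principle to get $y\geq 0$ first), so the weak maximum principle gives $v_{1,0}^{(1)}\geq y$ throughout the cylinder. Setting
\begin{equation*}
a_2:=\inf_{(t,x)\in(\delta_1,T)\times\Omega}y(t,x),
\end{equation*}
the desired inequality $v_{1,0}^{(1)}(t,x)\geq a_2$ holds on $(\delta_1,T_1(r+\epsilon))\times\Omega$.

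The only remaining (and expected) difficulty is the strict positivity $a_2>0$; I would reproduce the Harnack-plus-unique-continuation contradiction from the end of Theorem \ref{t1}. If $y(t_1,x_1)=0$ at an interior minimum, the parabolic Harnack inequality \cite[Theorem 10, pp. 370]{Ev} propagates the zero on a small space-time ball around $(t_1,x_1)$, the unique continuation result \cite[Theorem 1.1]{SS} for parabolic equations with $W^{1,\infty}$ coefficients propagates it to the entire time slice, and taking the trace on $\partial\Omega$ at $t=\delta_1$ contradicts $\chi(\delta_1,\cdot)=\delta_2>0$; the case $x_1\in\partial\Omega$ is already excluded by the boundary value $\chi=\delta_2>0$ on $[\delta_1,T]\times\partial\Omega$. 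Crucially, $y$ and therefore $a_2$ are defined without any reference to $r$ or to $F_1$ beyond $\kappa(0)$, which gives the claimed uniform dependence.
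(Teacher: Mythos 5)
Your proof is correct and follows essentially the same route as the paper: reduce to the bound $\pm v_{1,\pm r}(t,x)\geq a_2 r$ via the convexity chain \eqref{t5k}, compare $v_{1,0}^{(1)}$ with the reference solution $y$ of \eqref{eqy}, set $a_2=\inf_{(\delta_1,T)\times\Omega}y$, and establish $a_2>0$ by the Harnack-plus-unique-continuation contradiction. (Incidentally, your bookkeeping for the equation satisfied by $v_{1,0}^{(1)}-y$, with potential $V_{1,0}$ and forcing $(\kappa(0)-V_{1,0})y$, is the consistent choice; the paper writes the potential as $\kappa(0)$ while keeping that forcing term, a harmless slip.)
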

\begin{proof} Let us fix $(t,x)\in(\delta_1,T_1(r+\epsilon))\times\Omega$ and recall that the map $\lambda\mapsto v_{1,\lambda}(t,x)\in\mathcal C([-r,r])$. Thus, the proof of the lemma will be completed if we show that 
\bel{l4a}\pm v_{1,\pm r}(t,x)\geq a_2r.\ee
Let us first consider the above estimate with $\pm$ replaced by $+$. In view of \eqref{t5k}, the proof of \eqref{l4a}  will be completed if we show that there exists a constant $a_2>0$, depending only on $\Omega$, $T$, $\kappa(0)$, $\delta_1$ and $\chi$, such that
\bel{l4b} v_{1,0}^{(1)}(t_0,x_0)\geq a_2.\ee
For this purpose, let us fix $y \in \mathcal C([0,T];\mathcal C^{2+\alpha}(\overline{\Omega}))\cap \mathcal C^{1+\alpha/2}([0,T];\mathcal C(\overline{\Omega}))$ solving
\bel{eqy}\left\{
\begin{array}{ll}             
\partial_ty-\Delta y+\kappa(0)y=0  & \mbox{in}\ (0,T)\times\Omega,
\\
y(t,x)=\chi(t,x)  &(t,x)\in(0,T)\times\partial\Omega,\\
y(0,x)=0  &x\in\Omega.
\end{array}
\right.\ee
We set $z=v_{1,0}^{(1)}-y$ and we notice that $z$ solves 
$$\left\{
\begin{array}{ll}             
\partial_tz-\Delta z+\kappa(0)z=(\kappa(0)-V_{1,0})y  & \mbox{in}\ (0,T_1(r+\epsilon))\times\Omega,
\\
z(t,x)=0  &(t,x)\in(0,T_1(r+\epsilon))\times\partial\Omega,\\
z(0,x)=0  &x\in\Omega.
\end{array}
\right.$$
On the other hand, we have
$$\kappa(0)-V_{1,0}(t,x)=\kappa(0)-\partial_uF_1(t,x,v_{1,0}(t,x)),\quad (t,x)\in(0,T_1(r+\epsilon))\times\Omega$$
and from \eqref{t1a} we find $v_{1,0}=0$. Thus, condition \eqref{t2a} implies that
$$\kappa(0)-V_{1,0}(t,x)=\kappa(0)-\partial_uF_1(t,x,0)\geq 0,\quad (t,x)\in(0,T_1(r+\epsilon))\times\Omega.$$
Moreover, since $\chi\geq0$, applying the weak maximum principle we deduce that $y\geq0$. It follows that $(\kappa(0)-V_{1,0})y\geq0$ and the weak maximum principle implies that $v_{1,0}^{(1)}-y=z\geq0$. Therefore, we have $v_{1,0}^{(1)}(t,x)\geq y(t,x)$ and the estimate \eqref{l4b} holds true with
\bel{l4c}a_2:=\inf_{(t,x)\in (\delta_1,T)\times\Omega}y(t,x).\ee
Here it is clear that $a_2$ depends only on $\Omega$, $T$, $\delta_1$, $\chi$ and $\kappa(0)$ and the proof of the lemma will be completed if we show that $a_2>0$. This last property can be deduced by using arguments similar to the ones used in the proof of Theorem \ref{t1} for showing that $a_1>0$. This proves \eqref{l4a} with $\pm$ replaced by $+$. In the same way, we can complete the proof of  \eqref{l4a} with $\pm$ replaced by $-$.\end{proof}

Applying Lemma \ref{l4}, we are now in position to complete the proof of Theorem \ref{t2}.

\textbf{Proof of Thorem \ref{t2}.}  Let $r>0$, fix $s\in[-a_2r,a_2r]$ and consider $(t,x)\in(0,T_1(r+\epsilon))\times\Omega$. Consider also $\lambda_{t,x,s}$ given by Lemma \ref{l4}. Then, we have
\bel{t4d} \begin{aligned}&\abs{F_1(t,x,s)-F_2(t,x,s)}\\
&=\abs{F_1(x,v_{1,\lambda_{t,x,s}}(t,x))-F_2(t,x,v_{1,\lambda_{t,x,s}}(t,x))}\\
&\leq \abs{F_1(t,x,v_{1,\lambda_{t,x,s}}(t,x))-F_2(x,v_{2,\lambda_{t,x,s}}(t,x))}+\abs{F_2(t,x,v_{1,\lambda_{t,x,s}}(t,x))-F_2(t,x,v_{2,\lambda_{t,x,s}}(t,x))}.\end{aligned}\ee
We fix
$$I:=\abs{F_1(t,x,v_{1,\lambda_{t,x,s}}(t,x))-F_2(x,v_{2,\lambda_{t,x,s}}(t,x))},$$
$$II:=\abs{F_2(t,x,v_{1,\lambda_{t,x,s}}(t,x))-F_2(t,x,v_{2,\lambda_{t,x,s}}(t,x))}.$$
Using  estimate \eqref{t4d}, we will complete the proof of the theorem by proving that the expressions $I$ and $II$ can be estimated by the right hand of \eqref{t2b}. We start with $I$. Recall that
\bel{t4e} \begin{aligned}I&=\abs{\int_0^{\lambda_{t,x,s}}\partial_uF_1(t,x,v_{1,\tau}(t,x))v^{(1)}_{1,\tau}(t,x)-\partial_uF_2(t,x,v_{2,\tau}(t,x))v^{(1)}_{2,\tau}(t,x)d\tau}\\
&\leq \int_{-|\lambda_{t,x,s}|}^{|\lambda_{t,x,s}|}\abs{V_{1,\tau}(t,x)v^{(1)}_{1,\tau}(t,x)-V_{2,\tau}(t,x)v^{(1)}_{2,\tau}(t,x)}d\tau\\
&\leq\int_{-r}^{r}\abs{V_{1,\tau}(t,x)-V_{2,\tau}(t,x)}\abs{v^{(1)}_{1,\tau}(t,x)}d\tau+\int_{-r}^{r}\abs{V_{2,\tau}(t,x)}\abs{v^{(1)}_{1,\tau}(t,x)-v^{(1)}_{2,\tau}(t,x)}d\tau .\end{aligned}\ee
In view of condition (H), applying \eqref{t2a}, we obtain
\bel{t4g}\begin{aligned}&\norm{V_{j,\tau}}_{W^{1,\infty}((0,T_1(r+\epsilon))\times\Omega)}\\
&\leq \sup_{\lambda\in [-M(r+\epsilon),M(r+\epsilon)]}\norm{F_j(\cdot,\lambda)}_{W^{1,\infty}((0,T_1(r+\epsilon))\times\Omega)}+M(r+\epsilon)\sup_{\lambda\in [-M(r+\epsilon),M(r+\epsilon)]}\norm{\partial_uF_j(\cdot,\lambda)}_{W^{1,\infty}((0,T_1(r+\epsilon))\times\Omega)}\\
&\leq \kappa(M(r+\epsilon))(M(r+\epsilon)+1),\quad \tau\in[-r,r].\end{aligned}\ee
Combining \eqref{t4g} with \eqref{t4e}, we obtain
\bel{t4h}\begin{aligned}I\leq& 2r M(r+\epsilon) \sup_{\tau\in[-r,r]}\norm{V_{1,\tau}-V_{2,\tau}}_{L^\infty((0,T_1(r+\epsilon))\times\Omega)}\\
&+2r (M(r+\epsilon)+1)\kappa(M(r+\epsilon))\sup_{\tau\in[-r,r]}\norm{v_{1,\tau}^{(1)}-v_{2,\tau}^{(1)}}_{L^\infty((0,T_1(r+\epsilon))\times\Omega)}\end{aligned}.\ee
On the other hand, using \eqref{t4g} and considering the stability estimate of \cite[Theorem 1.1]{CK} with the geometric optics solutions of \cite[Proposition 4.3, 4.4]{CK1}, we get
$$\norm{V_{1,\lambda}-V_{2,\lambda}}_{L^2((0,T_1(r+\epsilon))\times\Omega)}\leq C_r\ln\left(3+\norm{\Lambda_{V_{1,\lambda}}-\Lambda_{V_{2,\lambda}}}_{\mathcal B\left(\mathcal K_0^{T_1(r+\epsilon)};\mathcal K_{T_1(r+\epsilon)}^*\right)}\right)^{-\theta_1},\quad \lambda\in[-r,r],$$
where $C_r>0$ depends on $r$, $\Omega$, $\kappa$, $M(r+\epsilon)$, $T_1(r+\epsilon)$, $\chi$, $n$ and $\theta_1>0$ depends on $n$. Recalling that $\mathcal N_{\lambda, r,F}'(0)=\Lambda_{V_{j,\lambda}}$ and applying a classical interpolation result (see e.g. \cite[Lemma AppendixB.1.]{CK}) as well as \eqref{t4g}, for all  $\lambda\in[-r,r]$, we obtain
$$\begin{aligned}\norm{V_{1,\lambda}-V_{2,\lambda}}_{L^\infty((0,T_1(r+\epsilon))\times\Omega)}&\leq C\norm{V_{1,\lambda}-V_{2,\lambda}}_{\mathcal C^{\frac{1}{2}}([0,T_1(r+\epsilon)]\times\overline{\Omega})}^{\frac{n+1}{n+2}}\norm{V_{1,\tau}-V_{2,\tau}}_{L^2((0,T_1(r+\epsilon))\times\Omega)}^{\frac{1}{n+2}}\\
\ &\leq C_r\ln\left(3+\norm{\mathcal N_{\lambda, r,F_1}'(0)-\mathcal N_{\lambda, r,F_2}'(0)}_{\mathcal B\left(\mathcal K_0^{T_1(r+\epsilon)};\mathcal K_{T_1(r+\epsilon)}^*\right)}\right)^{-\theta},\end{aligned}$$
with  $C_r>0$ depending on $r$, $\Omega$, $T$, $\kappa$, $M(r+\epsilon)$, $T_1(r+\epsilon)$, $\chi$, $n$ and with with $\theta>0$ depending on $n$. It follows that
\bel{t4i}\begin{aligned}&\sup_{\lambda\in(-r,r)}\norm{V_{1,\lambda}-V_{2,\lambda}}_{L^\infty((0,T_1(r+\epsilon))\times\Omega)}\\
&\leq C_r\sup_{\lambda\in(-r,r)}\ln\left(3+\norm{\mathcal N_{\lambda, r,F_1}'(0)-\mathcal N_{\lambda, r,F_2}'(0)}_{\mathcal B\left(\mathcal K_0^{T_1(r+\epsilon)};\mathcal K_{T_1(r+\epsilon)}^*\right)}\right)^{-\theta}.\end{aligned}\ee
According to the weak maximum principle, we find 
\bel{t4j}\norm{v_{2,\lambda}^{(1)}}_{L^\infty((0,T_1(r+\epsilon))\times\Omega)}\leq 1,\quad \lambda\in[-r,r].\ee
Fixing $w=v_{1,\lambda}^{(1)}-v_{2,\lambda}^{(1)}$, we deduce that $w$ solves
$$\left\{
\begin{array}{ll}
\partial_tw-\Delta w+V_{1,\lambda}w=(V_{2,\lambda}-V_{1,\lambda})v_{2,\lambda}^{(1)}  & \mbox{in}\ (0,T_1(r+\epsilon))\times\Omega ,
\\
w=0 &\mbox{on}\ \partial(0,T_1(r+\epsilon))\times\Omega,\\
w(x,0)=0  &x\in\Omega.
\end{array}
\right.$$
Therefore, in view of \cite[Theorem 9.1, pp. 341]{LSU}, fixing $p>n+1$ and applying \eqref{t4g}, \eqref{t4j} as well as the Sobolev embedding theorem, we deduce that
$$\begin{aligned}\norm{v_{1,\lambda}^{(1)}-v_{2,\lambda}^{(1)}}_{L^\infty((0,T_1(r+\epsilon))\times\Omega)}&\leq C\norm{w}_{W^{1,p}((0,T_1(r+\epsilon))\times\Omega)}\\
\ &\leq C_r\norm{V_{1,\lambda}^{(1)}-V_{2,\lambda}^{(1)}}_{L^\infty((0,T_1(r+\epsilon))\times\Omega)},\quad \lambda\in[-r,r].\end{aligned}$$
Then, \eqref{t4i} implies
\bel{t4k}\begin{aligned}&\sup_{\lambda\in(-r,r)}\norm{v_{1,\lambda}^{(1)}-v_{2,\lambda}^{(1)}}_{L^\infty((0,T_1(r+\epsilon))\times\Omega)}\\
&\leq C_r\sup_{\lambda\in(-r,r)}\ln\left(3+\norm{\mathcal N_{\lambda, r,F_1}'(0)-\mathcal N_{\lambda, r,F_2}'(0)}_{\mathcal B\left(\mathcal K_0^{T_1(r+\epsilon)};\mathcal K_{T_1(r+\epsilon)}^*\right)}\right)^{-\theta}.\end{aligned}\ee
Combining this with \eqref{t4h} and \eqref{t4i}, we get
\bel{t4l}I\leq C_r\sup_{\lambda\in(-r,r)}\ln\left(3+\norm{\mathcal N_{\lambda, r,F_1}'(0)-\mathcal N_{\lambda, r,F_2}'(0)}_{\mathcal B\left(\mathcal K_0^{T_1(r+\epsilon)};\mathcal K_{T_1(r+\epsilon)}^*\right)}\right)^{-\theta}.\ee
Now let us consider $II$. Applying  the mean value theorem and \eqref{t2a}, we get
$$\begin{aligned}II&\leq \sup_{\lambda\in[-M(r+\epsilon),M(r+\epsilon)]}\norm{\partial_u F_2(\cdot,\lambda)}_{L^\infty((0,T_1(r+\epsilon))\times\Omega)}\abs{v_{1,\lambda_{t,x,s}}(t,x)-v_{2,\lambda_{t,x,s}}(t,x)}\\
&\leq \kappa(M(r+\epsilon))2r\sup_{\lambda\in(-r,r)}\norm{v_{1,\lambda}^{(1)}-v_{2,\lambda}^{(1)}}_{L^\infty((0,T_1(r+\epsilon))\times\Omega)}.\end{aligned}$$
Then, applying \eqref{t4k}, we deduce that
$$II\leq C_r\sup_{\lambda\in(-r,r)}\ln\left(3+\norm{\mathcal N_{\lambda, r,F_1}'(0)-\mathcal N_{\lambda, r,F_2}'(0)}_{\mathcal B\left(\mathcal K_0^{T_1(r+\epsilon)};\mathcal K_{T_1(r+\epsilon)}^*\right)}\right)^{-\theta}.$$
Combining this estimate with \eqref{t4l} and \eqref{t4d} we deduce \eqref{t2b}. This completes the proof of Theorem \ref{t2}.\qed

\section*{Acknowledgements}

The work of Y.K is partially supported by  the French National Research Agency ANR (project MultiOnde) grant ANR-17-CE40-0029.
 The research of G.U. is partially supported by NSF, a Walker Professorship at UW and a Si-Yuan Professorship at IAS, HKUST. Part of the work was supported by the NSF grant DMS-1440140 while  G.U. were in residence at MSRI in Berkeley, California, during Fall 2019 semester.

\bigskip
\vskip 1cm

\end{document}